\algrenewcommand\alglinenumber[1]{\footnotesize #1:} 
\pgfplotsset{compat=1.16}
\newtheorem{theorem}{Theorem}
\newtheorem{proposition}{Proposition}[section]
\newtheorem{lemma}{Lemma}[section]
\newtheorem{remark}{Remark}[section]
\numberwithin{equation}{section}
\begin{document}

\begin{frontmatter}
 \title{Unfitted boundary algebraic equation method based on difference potentials and lattice Green's function in 3D}

\author{Qing~Xia\corref{cor}}
\ead{qxia@kean.edu}

\address{Department of Mathematics, Wenzhou Kean University, Zhejiang, China, 325060.}

\begin{abstract}

This work presents an unfitted boundary algebraic equation (BAE) method for solving three-dimensional elliptic partial differential equations on complex geometries using finite difference on structured meshes.  We demonstrate that replacing finite auxiliary domains with free-space LGFs streamlines the computation of difference potentials, enabling matrix-free implementations and significant cost reductions. We establish theoretical foundations by showing the equivalence between direct formulations in difference potentials framework and indirect single/double layer formulations and analyzing their spectral properties. The spectral analysis demonstrates that discrete double layer formulations provide better-conditioned systems for iterative solvers, similarly as in boundary integral method. The method is validated through matrix-free numerical experiments on both Poisson and modified Helmholtz equations in 3D implicitly defined geometries, showing optimal convergence rates and computational efficiency. This framework naturally extends to unbounded domains and provides a foundation for applications to more complex systems like Helmholtz and Stokes equations.

\end{abstract}

\begin{keyword}
Lattice Green's function, boundary algebraic equations, difference potentials method, matrix-free methods, unfitted boundary methods, discrete potential theory
\end{keyword}

\end{frontmatter}




\section{Introduction}

Elliptic partial differential equations (PDEs) form the cornerstone of numerous scientific and engineering applications, from steady-state heat conduction to electrostatics and fluid dynamics. These equations also arise naturally when implicit time-stepping schemes are employed for parabolic or hyperbolic problems, where stability and accuracy requirements often necessitate the solution of elliptic systems at each time step. A fundamental challenge in solving these equations lies in handling complex geometries while maintaining computational efficiency and numerical accuracy.

Traditional numerical approaches for elliptic PDEs broadly fall into three categories:
\begin{itemize}
\item Body-fitted mesh methods (finite elements, boundary-fitted finite differences) \cite{barrett1987fitted} offer high accuracy but require complex mesh generation and management, particularly challenging for moving boundaries or topology changes.
\item Meshless methods (radial basis functions \cite{fornberg2015solving}, boundary integral methods \cite{mckenney1995fast,zhong2018implicit,zhou2024correction,ying2013kernel}, spectral method \cite{gu2021efficient}) provide geometric flexibility and often lead to dense linear systems and face challenges in handling variable coefficients.
\item Embedded boundary methods such as immersed finite difference/element \cite{peskin2002immersed,leveque1994immersed}, cut-cell methods \cite{chen2023arbitrarily,chen2021adaptive,hansbo2002unfitted,hansbo2014cut}, Difference Potentials Method (DPM) \cite{ryaben2012method}, combine the efficiency of structured grids with geometric flexibility but might struggle with accuracy near boundaries and stability issues.
\end{itemize}

Among these approaches, boundary integral methods (BIM) deserve special attention as they share important similarities with our proposed approach. BIM reduce volumetric PDEs to boundary integral equations, achieving dimension reduction and naturally handling unbounded domains. These methods excel in problems with constant coefficients where fundamental solutions are known analytically, and they provide high accuracy by incorporating the exact representation of solutions through Green's functions.
Our proposed approach shares the dimensional reduction advantage of BIM but operates in a discrete setting using lattice Green's functions instead of continuous ones. This discrete framework offers several distinct advantages: it naturally aligns with computational grids, avoids singular integral evaluations common in BIM, and maintains the ability to handle variable coefficients through the underlying difference potentials framework. 

Difference Potentials Method (DPM) \cite{ryaben2012method} offers a general framework of solving elliptic equations in arbitrary geometry using unfitted structured meshes. By the introduction of auxiliary domains and the discrete Green's function defined on the auxiliary domain, the discrete elliptic equations in the original domain can be reduced to boundary algebraic equations at grid points near the continuous boundary.  However, existing implementations face significant computational challenges, particularly in three dimensions. The primary bottleneck lies in constructing difference potentials operators, which traditionally requires multiple volume solves in the auxiliary domain. Typically, fast Poisson solvers such as those relied on FFTs are employed for acceleration of those multiple volume solves, which we aim to bypass in this work.

This work builds on previous work of using local basis function derived the Galerkin difference method \cite{banks2016galerkin} to construct difference potentials operators \cite{xia2023local}. The main novelty is the integration lattice Green's functions (LGFs) into the DPM framework. LGFs, which serve as fundamental solutions for discrete elliptic operators on infinite lattices, enable explicit representation of boundary potentials without relying on solving auxiliary problems defined in finite domains. This integration provides several key advantages, including reduction in computational complexity, matrix-free implementation capabilities for better memory efficiency and natural handling of unbounded domains.

Lattice Green's functions have been studied in many applications such as in \cite{liska2016fast,dorschner2020fast,liska2014parallel,cserti2000application,joyce2004exact,borwein2013lattice}.  In this work, we focus on Cartesian lattices, while triangular, honeycomb lattices are also possible \cite{zhao2011extension}. While lattice Green's functions have been previously studied for simple geometries and infinite domains, their systematic integration with difference potentials for complex geometries represents a significant advancement. 

This idea of using boundary algebraic equations to solve elliptic partial differential equations numerically can be traced back to a NASA technical report \cite{saltzer1958discrete}. Our approach builds on difference potentials and earlier work by Martinsson et al on boundary algebraic equations (BAE) \cite{martinsson2009boundary}. The key idea of BAE is to mimic its continuous counterpart, the boundary integral method, where the fundamental solution of the discrete Poisson equation is taken as the lattice Green's function. Discrete single and double layer potentials are defined similarly as in the boundary integral method, where a system of boundary algebraic equations are solved for unknown density at the boundary lattice points. One major difference is that no singularity occur in the BAE when the source point coincides with the target. Interior or exterior solutions at any point then can be obtained by summation of the source-target interactions, using discrete layer potentials. 

In the current work, we do not require grid nodes to align with boundary curves, hence the method is termed as ``unfitted'' BAE. We extend the BAE framework to handle arbitrary geometries through a rigorous theoretical foundation. In this work, BAE is also capable of handling nonhomogeneous source terms in finite domains via Fast Fourier Transform. For infinite domains and nonhomogeneous source functions, fast summation techniques such as fast multipole method \cite{beatson1997short} will be needed and studied in future work.

Our method establishes equivalence of boundary equations in DPM and boundary equations in BAE, so as to construct boundary equations directly without relying on auxiliary volume solves. Such direct formulation is also suitable for matrix-free implementation, offering memory efficiency for large-scale computations. Inheriting merits of both DPM and BAE, our method naturally admits property of dimension reduction, and offers a robust foundation for multiphysics simulations in complex domains.  The framework naturally extends to unbounded domains and provides pathways for applications to more challenging systems such as high-frequency Helmholtz problems, Stokes flows and etc.

The rest of the manuscript is organized as follows: Section \ref{sec:dpm} presents the fundamental theory of difference potentials and their extension to infinite domains using LGFs. Section \ref{sec:bae} develops direct and indirect boundary algebraic equation formulations, analyzing their spectral properties and establishing equivalence relationships. Section \ref{sec:numerical_results} validates the method through numerical experiments for Poisson and modified Helmholtz equations in 3D implicitly defined geometries. Section \ref{sec:conclusion} concludes with future directions and potential applications.

\section{Difference Potentials Method}\label{sec:dpm}

In this work, we focus on efficient and accurate numerical solution of the elliptic partial differential equations in the form of:
\begin{subequations}\label{eqn:elliptic}
\begin{align}
-\Delta u +\sigma u &= f,\quad x\in\Omega\label{eqn:pde}\\
u &= g,\quad x\in\partial\Omega\label{eqn:pde_bc}
\end{align}
\end{subequations}
with $\sigma\geq0$. The geometry of $\Omega\subset\mathbb{R}^3$ is arbitrarily shaped and the PDE in \eqref{eqn:elliptic} is assumed well-posed. For time-dependent problems, when implicit time stepping is employed, $\sigma$ will be $1/\Delta t$ or $1/\Delta t^2$ where $\Delta t$ denotes the time stepping size. In this work, we will demonstrate the developed numerical algorithm with Dirichlet boundary condition \eqref{eqn:pde_bc}. Other types of boundary conditions can be handled similarly as discussed in \cite{xia2023local}.

\subsection{Difference Potentials in the finite domain}

Given a bounded domain $\Omega\subset \mathbb{R}^3$ of arbitrary shapes, we first embed it into a computationally simple domain $\Omega^0$ such as a cube. Then $\Omega^0$ is discretized using a Cartesian mesh. The grid points will be denoted: $M^0$ for points in $\Omega^0$, $M^+$ for points in $\Omega$, $M^-:=M^0\backslash M^+$. The stencil points $N^\pm$ are defined as all points needed in the formulation of a finite difference operator for points in $M^\pm$, respectively, and $N^0:=N^+\cup N^-$. In the second order case in 3D, $N^\pm$ consist of the seven-point stencil:
\begin{align}
N^\pm = \left\{(x_{i\pm 1},y_j,z_k),(x_i,y_{j\pm1},z_k),(x_i,y_j,z_{k\pm1}),(x_i,y_j,z_k) \ \big \vert\  (x_i,y_j,z_k)\in M^\pm\right\}.
\end{align}
The point sets $N^\pm$ have an intersection $\gamma:=N^+\cap N^-$ known as \emph{the discrete grid boundary} and $\gamma$ can be divided into two subsets, where $\gamma_+$ corresponds to the interior of $\Omega$ and $\gamma_-$ the exterior of $\Omega$. See Figure~\ref{fig:points} for an illustration of these two point sets $\gamma^\pm$ in 2D.

\begin{figure}[htbp]
\centering
\includegraphics[width=0.4\textwidth]{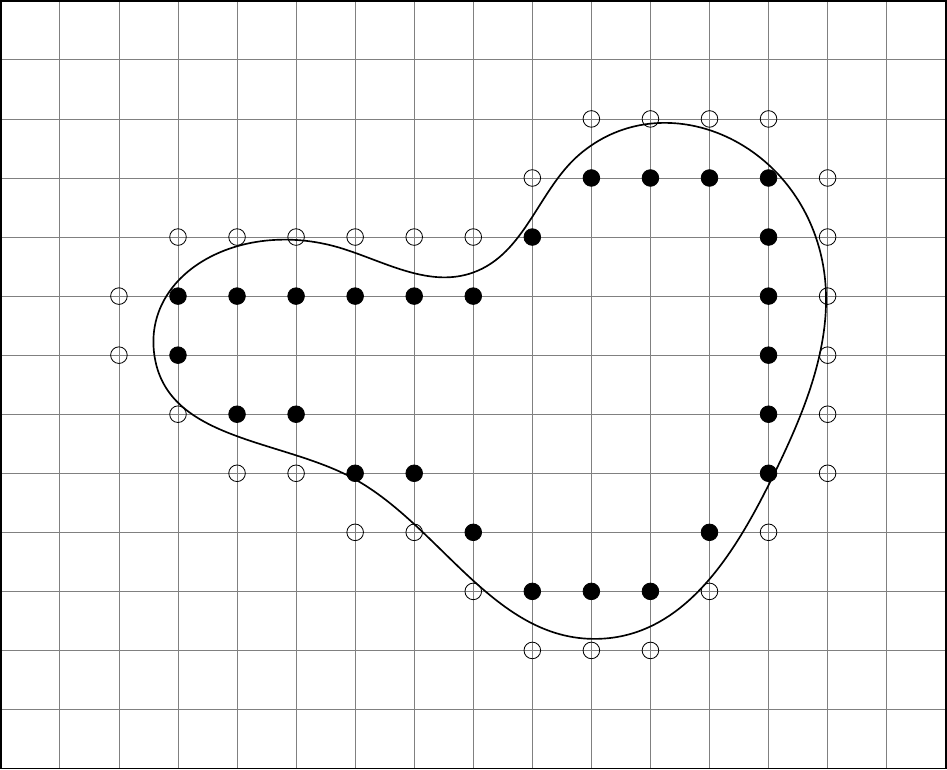}
\caption{Example of point sets in 2D (dot: $\gamma_{+}$, circle: $\gamma_-$)}\label{fig:points}
\end{figure}

With these point sets defined, we can discretize the PDE \eqref{eqn:pde} on point set $M^+\subset\Omega$
\begin{align}\label{eqn:discrete}
L_hu_{jkl}:= -\Delta_h u_{jkl}+\sigma u_{jkl} = f_{jkl},\quad x_{jkl}\in M^+
\end{align}
where $\Delta_h$ will be taken as the 7-point central finite difference stencil.
Note that values at $\gamma_-$ is unknown and boundary conditions need to be imposed for boundary closures, which will be discussed in details later. Given the discretization \eqref{eqn:discrete}, we introduce the auxiliary problem defined on the point set $N^0$
\begin{subequations}\label{eqn:aux}
\begin{align}
L_h u_{jkl} &= q_{jkl},\quad x_{jkl}\in M^0\\
u_{jkl} &= 0,\quad x_{jkl}\in N^0\backslash M^0
\end{align}
\end{subequations}
where $q_{jkl}$ is some grid function defined on $M^0$. We will denote the inverse operation of finding solutions to the difference equations \eqref{eqn:aux} with the given boundary condition as $u_{jkl}=G_hq_{jkl}$.

For a finite and computationally simple auxiliary domain $\Omega^0$, the operator $G_h$ is taken to be some fast inversion of the $L_h$. For example, FFT can be used to solve $L_h u_{jkl}=q_{jkl}$ for grid function $q_{jkl}$ and no explicit knowledge of the Green's function for the auxiliary domain $\Omega^0$ is needed. Now we will define the particular solution obtained with source function $q_{jkl}=\chi_{M^+}f_{jkl}$:
\begin{align}\label{eqn:ps}
G_hf_{jkl} = G_h[\chi_{M^+}f_{jkl}],\quad x_{jkl}\in M^0
\end{align}
where $\chi_{M^+}$ denotes a characteristic function for set $M^+$, and the difference potentials obtained with source function $q_{jkl}=\chi_{M^-}L_hw_{jkl}$
\begin{align}\label{eqn:dp}
P_{N^+\gamma}w_{jkl}=G_h[\chi_{M^-}L_hw_{jkl}]
\end{align}
where $w_{jkl}$ is any grid function defined on $N^+$.

It can be checked that the superposition of the particular solution \eqref{eqn:ps} and the difference potential \eqref{eqn:dp} satisfies the difference equation \eqref{eqn:discrete} in $M^+$.

The following key theorem establishes the dimension reduction relation from the volumetric difference equation \eqref{eqn:discrete} to algebraic equations at grid points only near the boundary, which can be found in many previous work, e.g. \cite{xia2023local,ryaben2006algorithm,medvinsky2012method,albright2015high,epshteyn2014algorithms,epshteyn2012upwind,epshteyn2019efficient}.
\begin{theorem}
The discrete equation \eqref{eqn:discrete} for points in $M^+$ is equivalent to the following boundary equation with projections:
\begin{align}\label{eqn:bep}
u_\gamma - P_\gamma u_\gamma = G_hf_\gamma.
\end{align}
where $P_\gamma u_\gamma:=Tr_{\gamma}G_h[\chi_{M^-}L_hu_\gamma]$ and $G_hf_\gamma:=Tr_\gamma G_h[\chi_{M^+}f_h]$ and $Tr_{\gamma}$ denotes the trace operator on the set $\gamma$, $\chi_{M^-}$ denotes the characteristic function for the set $M^-$.
\end{theorem}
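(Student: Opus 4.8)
The plan is to prove the equivalence in both directions, resting on two structural properties of the auxiliary solution operator $G_h$. First, $G_h$ is a right inverse of $L_h$ on the interior, i.e. $L_h G_h q = q$ on $M^0$ for every grid function $q$; this is immediate from the definition \eqref{eqn:aux}. Second, $G_h$ is a left inverse on functions that already satisfy the homogeneous auxiliary boundary condition, i.e. $G_h L_h v = v$ on $N^0$ whenever $v$ vanishes on $N^0\backslash M^0$; this follows from uniqueness for \eqref{eqn:aux}, since $G_h L_h v$ and $v$ solve the same interior equation with the same (zero) auxiliary data and must therefore coincide.

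Next I would record the locality fact that lets the boundary equation close on $\gamma$. For a grid function $w$ supported on $N^+$ (extended by zero to $N^0$), the value of $\chi_{M^-} L_h w$ at a point $x\in M^-$ involves $w$ only at stencil neighbours lying in $N^-$; after zero extension the sole surviving contributions come from $N^+\cap N^-=\gamma$. Hence $\chi_{M^-}L_h w$, and therefore the difference potential $P_{N^+\gamma} w = G_h[\chi_{M^-}L_h w]$, depends on $w$ solely through its trace $w_\gamma = Tr_\gamma w$. This justifies writing $P_\gamma u_\gamma$ and is precisely what reduces the volumetric problem to one posed on $\gamma$.

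For the forward implication I would take a solution $u$ of \eqref{eqn:discrete}, extend it by zero outside $N^+$, and split the interior source as $L_h u = \chi_{M^+} L_h u + \chi_{M^-} L_h u$ on $M^0$, using $\chi_{M^+} L_h u = \chi_{M^+} f$ from \eqref{eqn:discrete}. Applying the left-inverse identity $G_h L_h u = u$ (valid because the zero extension vanishes on $N^0\backslash M^0$, as $\gamma\subseteq N^+\subseteq M^0$ for a sufficiently large $\Omega^0$) yields $u = G_h[\chi_{M^+} f] + G_h[\chi_{M^-}L_h u]$ on $N^0$. Taking the trace on $\gamma$ and invoking the locality fact to replace $L_h u$ by $L_h u_\gamma$ gives exactly \eqref{eqn:bep}.

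For the converse I would start from a grid function $u_\gamma$ on $\gamma$ satisfying \eqref{eqn:bep} and define $u := G_h[\chi_{M^+} f] + G_h[\chi_{M^-}L_h u_\gamma]$ on $N^0$, which obeys the homogeneous auxiliary condition by construction. The right-inverse identity $L_h G_h = \mathrm{Id}$ on $M^0$ gives $L_h u = \chi_{M^+} f + \chi_{M^-} L_h u_\gamma$, whose restriction to $M^+$ equals $f$ since $\chi_{M^-}$ vanishes there, so $u$ solves \eqref{eqn:discrete}; taking the trace on $\gamma$ and using \eqref{eqn:bep} shows $Tr_\gamma u = G_h f_\gamma + P_\gamma u_\gamma = u_\gamma$, confirming consistency with the prescribed boundary data. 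The one point demanding care — and the main obstacle — is the bookkeeping of supports and stencil neighbours: one must verify that zero extension from $N^+$ does not corrupt the auxiliary boundary data and that $\chi_{M^-} L_h$ genuinely sees only $\gamma$, since the entire dimension reduction hinges on these support relations rather than on any analytic estimate.
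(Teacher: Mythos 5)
The paper offers no proof of this theorem at all, deferring to the cited DPM literature; your argument is correct and is essentially the standard one found there, combining the right/left-inverse identities for $G_h$, the source splitting $L_h u = \chi_{M^+}L_h u + \chi_{M^-}L_h u$ on $M^0$, and the locality fact that $\chi_{M^-}L_h$ applied to the zero extension of a grid function on $N^+$ depends only on its trace on $\gamma = N^+\cap N^-$ (the same extension-independence the paper itself invokes later when proving its Theorem~3). The one assumption you flag, $N^+\subseteq M^0$, is indeed necessary and is implicit in the paper's embedding of $\Omega$ into the strictly larger auxiliary domain $\Omega^0$.
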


Each column of the operator $P_\gamma$ can be constructed using a unit density
\begin{align}\label{eqn:unit_density}
u_{j^*k^*l^*}(j,k,l) = \left\{
\begin{array}{cc}
1,& j=j^*,k=k^*,l=l^*,\\
0,&\mbox{ elsewhere.}
\end{array}
\right.
\end{align}
and the corresponding column in $P_\gamma$ for point $x_{j^*k^*l^*}$ in the set $\gamma$ is thus
\begin{align}
P_\gamma Tr_{\gamma}u_{j^*k^*l^*}=Tr_{\gamma}G_h[\chi_{M^-}L_hu_{j^*k^*l^*}].
\end{align}
Essentially we are solving the auxiliary problem
\begin{subequations}\label{eqn:unit_finite_aux_prob}
\begin{align}
L_h u_{jkl} &= \chi_{M^-}L_hu_{j^*k^*l^*}\\
u_{jkl} &= 0 
\end{align}
\end{subequations}
and inject $Tr_{\gamma}u_{jkl}$ as the corresponding column for $P_\gamma$.

The boundary equations with projections \eqref{eqn:bep} can be further reduced to the interior set $\gamma_+$ only, as illustrated by the following theorem.
\begin{theorem}
The boundary equation with projections \eqref{eqn:bep} defined on $\gamma$ is equivalent to the following reduced boundary equation with projections \eqref{eqn:rbep} defined on $\gamma_+$.
\begin{align}\label{eqn:rbep}
u_{\gamma_+} - P_{\gamma_+} u_\gamma = G_hf_{\gamma_+}.
\end{align}
where $P_{\gamma_+} u_\gamma:=Tr_{\gamma_+}G_h\chi_{M^-}L_h[u_\gamma]$ and $G_hf_{\gamma_+}:=Tr_{\gamma_+} G_h[\chi_{M^+}f_h]$.
\end{theorem}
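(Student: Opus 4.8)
The plan is to recast both boundary equations as trace conditions on a single grid function and then exploit unique solvability of the discrete problem on the exterior region $M^-$. By linearity of $G_h$, introduce the grid function
\begin{equation}
w := G_h\big[\chi_{M^-}L_h u_\gamma + \chi_{M^+}f_h\big]
\end{equation}
on $N^0$, where $u_\gamma$ is understood to be extended by zero off $\gamma$. With this notation $P_\gamma u_\gamma + G_hf_\gamma = Tr_\gamma w$, so the boundary equation \eqref{eqn:bep} is equivalent to $u_\gamma = Tr_\gamma w$, while the reduced equation \eqref{eqn:rbep} is equivalent to $u_{\gamma_+} = Tr_{\gamma_+}w$. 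Since $\gamma_+\subset\gamma$, the implication \eqref{eqn:bep}$\Rightarrow$\eqref{eqn:rbep} is immediate by restricting the trace from $\gamma$ to $\gamma_+$. The content of the theorem is therefore the reverse implication: assuming $u_{\gamma_+}=Tr_{\gamma_+}w$, one must recover the remaining equations $u_{\gamma_-}=Tr_{\gamma_-}w$ on $\gamma_-$.

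For the reverse direction I would set $v := w - u_\gamma$ on $N^0$ (again with $u_\gamma$ zero-extended) and show that $v$ vanishes on all of $M^-$, which contains $\gamma_-$. First, applying $L_h$ and using that $L_h w = \chi_{M^-}L_h u_\gamma + \chi_{M^+}f_h$ on $M^0$ gives $L_h v = 0$ at every point of $M^-$, since $\chi_{M^-}=1$ and $\chi_{M^+}=0$ there. Next I would identify the discrete boundary of this problem, namely the stencil points $N^-\backslash M^-$, and verify the elementary set inclusion $N^-\backslash M^-\subseteq \gamma_+\cup(N^0\backslash M^0)$: a stencil neighbor of an $M^-$ point that is not itself in $M^-$ must lie either on the outer boundary $N^0\backslash M^0$ or in $M^+\cap N^-=\gamma_+$. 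On the outer boundary $v=0$ because both $w$ (through the zero boundary condition built into $G_h$) and the zero-extended $u_\gamma$ vanish there; on $\gamma_+$ the standing assumption $u_{\gamma_+}=Tr_{\gamma_+}w$ gives exactly $v|_{\gamma_+}=0$.

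Thus $v$ solves the homogeneous discrete Dirichlet problem $L_h v = 0$ on $M^-$ with $v=0$ on $N^-\backslash M^-$. The decisive step is then to invoke unique solvability of this discrete Dirichlet problem on $M^-$ to conclude $v\equiv 0$ on $M^-$, and in particular $v|_{\gamma_-}=0$, i.e. $u_{\gamma_-}=Tr_{\gamma_-}w$, which is the missing block of \eqref{eqn:bep}. I expect this to be the crux of the argument: it rests on the fact that $L_h=-\Delta_h+\sigma$ with $\sigma\ge 0$ obeys a discrete maximum principle, so the only grid function that is $L_h$-harmonic on $M^-$ with vanishing Dirichlet data is the zero function. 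The remaining ingredients are routine: the computation $L_h v=0$ on $M^-$, the point-set bookkeeping above, and the mild configuration assumption $\gamma\subseteq M^0$ (automatic once $\Omega$ is strictly interior to $\Omega^0$, since $\gamma_\pm\subseteq M^\pm\subseteq M^0$) that makes the zero-extension of $u_\gamma$ and the built-in boundary condition of $G_h$ consistent. I would isolate the unique-solvability statement as the single nontrivial ingredient and treat everything else as bookkeeping.
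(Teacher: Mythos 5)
Your proposal is correct, and there is no internal proof to compare it against: the paper omits the argument for this theorem entirely, deferring to the cited reference \cite{epshteyn2019efficient}, so a self-contained derivation is genuinely useful here.

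Your argument holds up under scrutiny and its mechanism matches the toolkit the paper itself uses elsewhere. The forward implication is, as you say, mere restriction of the trace from $\gamma$ to $\gamma_+$; the entire content is the reverse implication, which you correctly reduce to uniqueness for the homogeneous discrete Dirichlet problem on $M^-$. The bookkeeping checks out: with $w = G_h\big[\chi_{M^-}L_h u_\gamma + \chi_{M^+}f_h\big]$ and $v = w - u_\gamma$ (zero-extended), one has $L_h v = 0$ at every point of $M^-$; the inclusion $N^-\backslash M^-\subseteq \gamma_+\cup(N^0\backslash M^0)$ is right, since a stencil point of $M^-$ not in $M^-$ either leaves $M^0$ or lies in $M^+\cap N^- = \gamma_+$ (every point belongs to its own stencil, so $M^+\subseteq N^+$); $v$ vanishes on $N^0\backslash M^0$ by the homogeneous boundary condition built into $G_h$, and on $\gamma_+$ by hypothesis. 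The unique-solvability step you isolate as the crux is sound even for $\sigma=0$: every connected component of the finite set $M^-$ contains a point of extremal coordinate whose neighbor lies in $N^-\backslash M^-$, so the Dirichlet matrix for $-\Delta_h+\sigma$, $\sigma\geq 0$, is an irreducibly diagonally dominant $M$-matrix on each component and the discrete maximum principle forces $v\equiv 0$ on $M^-\supseteq\gamma_-$, recovering the $\gamma_-$ block of \eqref{eqn:bep}. Two points are worth making explicit when you write this up. First, your use of the zero extension of $u_\gamma$ is legitimate precisely because the difference potential is extension-independent; that fact is proved inside the paper's Theorem~\ref{lem:dp_gh} (where the same ``only the zero solution'' uniqueness argument on $M^+$ and $M^-$ appears), so citing it closes the loop and shows your identification $P_\gamma u_\gamma + G_h f_\gamma = Tr_\gamma w$ is consistent with the paper's conventions. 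Second, your configuration assumption $\gamma\subseteq M^0$ is indeed implicit in the paper's setup, since $\Omega$ is embedded strictly inside the auxiliary domain $\Omega^0$; it is needed exactly where you invoke it, to make the zero extension compatible with the auxiliary boundary condition.
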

The proof can be found, for example, in \cite{epshteyn2019efficient}.

The reduced boundary equations with projections \eqref{eqn:rbep} is equivalent to the finite difference equations \eqref{eqn:discrete}. To close the linear system, one can use the local basis functions approach developed in \cite{xia2023local}. For example, the boundary condition can be discretized as
\begin{align}\label{eqn:bc}
\sum_{x_{jkl}\in\gamma} u_{jkl} \Phi_{jkl}(x^*,y^*,z^*) = g(x^*,y^*,z^*)
\end{align}
in the case of Dirichlet boundary condition, 
where $(x^*,y^*,z^*)$ is a boundary point in a cut cell and $\Phi_{jkl}$ is a nodal basis function defined at point $x_{jkl}$. In the second order case, we can take $\Phi_{jkl}$ to be the standard hat function in 3D, while higher order version will take the form of difference Galerkin basis function studied in \cite{jacangelo2020galerkin,banks2016galerkin}.

Now we have a closed square system with $|\gamma|$ number of equations and $|\gamma|$ number of unknowns, where $|\gamma|$ is the cardinality of point set $\gamma$.
\begin{subequations}\label{eqn:inhom}
\begin{align}
u_{\gamma_+} - P_{\gamma_+} u_\gamma &= G_hf_{\gamma_+}\\
\sum_{x_{jkl}\in\gamma} u_{jkl} \Phi_{jkl}(x^*,y^*,z^*) &= g(x^*,y^*,z^*)
\end{align}
\end{subequations}

The following lemma will serve to reduce the nonhomogeneous boundary equations \eqref{eqn:inhom} to homogeneous ones.

\begin{theorem}\label{lem:dp_gh}
The trace of the difference potential with density of the particular solution $G_hf_\gamma$ is 0, i.e.
\begin{align}
P_\gamma [G_hf_\gamma] = 0.
\end{align}
\end{theorem}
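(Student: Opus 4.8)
The plan is to read the density through the particular solution rather than through the zero-extension that defines $P_\gamma$ columnwise. By \eqref{eqn:ps} set $U_f := G_h[\chi_{M^+}f_h]$, so that $L_h U_f = \chi_{M^+}f_h$ on $M^0$ and $G_h f_\gamma = Tr_\gamma U_f$; in particular $U_f$ vanishes on the auxiliary boundary $N^0\setminus M^0$ and is discretely $L_h$-harmonic on the whole exterior set, $L_h U_f = 0$ on $M^-$. The moral reason the statement holds is the one-line identity $P_\gamma[G_h f_\gamma]=Tr_\gamma G_h[\chi_{M^-}L_h U_f]=Tr_\gamma G_h[\chi_{M^-}\chi_{M^+}f_h]=0$, using $\chi_{M^-}\chi_{M^+}\equiv 0$. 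The one point needing care is that $P_\gamma$, assembled from the unit densities \eqref{eqn:unit_density}, acts on the \emph{zero-extension} of $Tr_\gamma U_f$ and not on the global grid function $U_f$, so this substitution is not literally what the operator computes and must be justified.

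To make this rigorous I would prove the general fact that $P_\gamma$ annihilates the trace of \emph{any} grid function $W$ that is $L_h$-harmonic on $M^-$ and vanishes on $N^0\setminus M^0$; the claim then follows by taking $W=U_f$. Write $\eta:=Tr_\gamma W$ and let $E\eta$ denote its extension by zero to $N^0$. For a point $p\in M^-$, only the stencil neighbours of $p$ lying in $\gamma$ contribute to $L_h E\eta(p)$, so the seven-point identity $0=L_h W(p)$ splits as the $\gamma$-contribution, which equals $L_h E\eta(p)$, plus the contribution from exterior points off the grid boundary, which equals $L_h W_e(p)$, where $W_e:=\chi_{M^-\setminus\gamma}W$. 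Hence $\chi_{M^-}L_h E\eta = -\chi_{M^-}L_h W_e$.

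Finally I would invoke the fundamental trace identity. Since $W_e$ vanishes on $N^0\setminus M^0$ we have $G_h L_h W_e = W_e$, so $Tr_\gamma G_h[\chi_{M^+}L_h W_e]+Tr_\gamma G_h[\chi_{M^-}L_h W_e]=Tr_\gamma W_e=0$ because $W_e\equiv 0$ on $\gamma$. Therefore $P_\gamma\eta = Tr_\gamma G_h[\chi_{M^-}L_h E\eta] = -Tr_\gamma G_h[\chi_{M^-}L_h W_e] = Tr_\gamma G_h[\chi_{M^+}L_h W_e]$, and this last term vanishes identically: $W_e$ is supported on $M^-\setminus\gamma$, and for $p\in M^+$ every stencil neighbour of $p$ that lies in $N^-$ must lie in $N^+\cap N^-=\gamma$, where $W_e=0$, so $\chi_{M^+}L_h W_e=0$. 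Thus $P_\gamma\eta=0$, and specializing to $W=U_f$ yields $P_\gamma[G_h f_\gamma]=0$.

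The hard part is exactly the extension bookkeeping flagged in the first paragraph. The two places where the geometry of the stencil sets is indispensable are (i) the rewriting $\chi_{M^-}L_h E\eta = -\chi_{M^-}L_h W_e$, which uses harmonicity of $W$ together with the fact that a seven-point stencil centred in $M^-$ reaches $N^+$ only through $\gamma$, and (ii) the cancellation $\chi_{M^+}L_h W_e=0$, which uses the symmetric statement for stencils centred in $M^+$. Once this elementary reachability fact is in hand, the remainder is routine: the auxiliary well-posedness $G_h L_h=\mathrm{Id}$ on functions vanishing on $N^0\setminus M^0$, and the cancellation $\chi_{M^+}\chi_{M^-}\equiv 0$.
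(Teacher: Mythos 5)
Your proof is correct, and it rests on the same central insight as the paper's: read the density $G_hf_\gamma$ as the trace of the particular solution $U_f=G_h[\chi_{M^+}f_h]$, so that the potential becomes $Tr_\gamma G_h[\chi_{M^-}L_hU_f]=Tr_\gamma G_h[\chi_{M^-}\chi_{M^+}f_h]=0$. Where you genuinely part ways with the paper is in how the substitution of $U_f$ for the zero-extension is licensed. The paper appeals to the general extension-independence of difference potentials---any two extensions of the same density on $\gamma$ yield the same potential---justified there by a uniqueness argument plus a pointer to Ryaben'kii's book; as written, that argument is loose, since it appears to conclude that $w=u_1-u_2$ vanishes on $M^+$ and $M^-$, which cannot hold in general (two extensions may differ arbitrarily away from $\gamma$); the true statement is only that the resulting potentials coincide. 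You instead prove exactly the special case needed: the assembled operator $P_\gamma$, which acts on zero-extensions, annihilates $Tr_\gamma W$ for every grid function $W$ that is $L_h$-harmonic on $M^-$ and vanishes on the frame $N^0\setminus M^0$. Your three ingredients---the stencil split $\chi_{M^-}L_h E\eta=-\chi_{M^-}L_h W_e$ (valid because a stencil centred in $M^-$ meets $N^+$ only inside $\gamma$), the identity $G_hL_hW_e=W_e$ for functions vanishing on the frame, and the cancellation $\chi_{M^+}L_hW_e=0$ (valid because $N^+\cap M^-\subset\gamma$, hence $N^+\cap(M^-\setminus\gamma_-)=\emptyset$)---are all correct, and together they make the extension bookkeeping airtight. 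The trade-off: the paper's route is shorter and leans on a reusable classical principle, while yours is self-contained and in effect supplies a constructive, rigorous proof of extension-independence in the one case the theorem requires; it would also serve as a cleaner replacement for the paper's sketchy intermediate argument.
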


\begin{proof}
The difference potential for any density $u_\gamma$ admits the following definition
\begin{align}
P_{N^+\gamma}u_\gamma:=G_h \chi_{M^-}L_hu,\quad x_{jkl}\in N^+
\end{align}
where $u$ is some extension of the density $u_\gamma$ to the grid $N$, i.e. $Tr_{\gamma}u = u_\gamma$. It can be argued that the exact form of extension does not change the value of the difference potential. To see this, assume we have $u_1\neq u_2$ but $Tr_\gamma u_1=Tr_{\gamma} u_2 = u_\gamma$ then in $N^+$ and define $w=u_1-u_2$, then $Tr_{\gamma}w=0$, and
\begin{align}
L_hw = L_h(u_1-u_2) = 0,\quad x_{jkl}\in M^+
\end{align}
due to the restriction operator $\chi_{M^-}$. The above linear system admits only zero solution in $M^+$ as the boundary is zero. For $M^-$, similar arguments also show $w=0$ in $M^-$.
Hence the difference potential is the same no matter how the extension is formed. Other proofs of this result can be found in Ryabenkii's book as well.

Now for $P_\gamma [G_hf_\gamma]$, we will choose the particular solution as the extension $[G_h\chi_{M^+}f]$ then the difference potential for density $[G_hf_\gamma]$ will be computed as
\begin{align}
L_h[P_{N^+\gamma}G_hf_\gamma] = \chi_{M^-}L_h[G_h\chi_{M^+}f_h]=0
\end{align}
since $M^+$ and $M^-$ are dis-adjoint. When homogeneous BC are imposed for the auxiliary problem, the difference potential of the particular solution will be 0.
\end{proof}

Now we define the homogeneous density
\begin{align}\label{eqn:vgamma}
v_\gamma = u_\gamma - G_hf_\gamma
\end{align} 
and the following proposition holds.

\begin{proposition}\label{prop:vgamma}
The reduced boundary equation with projections \eqref{eqn:rbep} is equivalent to the boundary equations for $v_{\gamma}$ defined in \eqref{eqn:vgamma}:
\begin{align}\label{eqn:beq}
v_{\gamma_+} - P_{\gamma_+} v_\gamma = 0
\end{align}
\end{proposition}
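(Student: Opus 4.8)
The plan is to establish the equivalence by a direct change of variables, exploiting only the linearity of the difference potential operator $P_{\gamma_+}$ together with Theorem~\ref{lem:dp_gh}. First I would rewrite the defining relation \eqref{eqn:vgamma} as $u_\gamma = v_\gamma + G_hf_\gamma$ and substitute it into the reduced boundary equation \eqref{eqn:rbep}. Because $P_{\gamma_+}$ acts linearly on densities supported on $\gamma$, and because the trace onto $\gamma_+$ distributes over the sum, this produces
\begin{align}
\bigl(v_{\gamma_+} + [G_hf_\gamma]_{\gamma_+}\bigr) - P_{\gamma_+}v_\gamma - P_{\gamma_+}[G_hf_\gamma] = G_hf_{\gamma_+}. \notag
\end{align}

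Next I would eliminate the two terms carrying the particular-solution density. The observation driving the proof is that $P_{\gamma_+}$ is nothing but $P_\gamma$ followed by restriction to $\gamma_+$, since both share the same interior grid function $G_h\chi_{M^-}L_h[\,\cdot\,]$ and differ only in the trace set; hence $P_{\gamma_+}[G_hf_\gamma] = \bigl[P_\gamma(G_hf_\gamma)\bigr]_{\gamma_+}$, which vanishes because $P_\gamma[G_hf_\gamma]=0$ by Theorem~\ref{lem:dp_gh} and $\gamma_+\subset\gamma$. For the surviving inhomogeneous term I would return to the definitions: $G_hf_\gamma = Tr_\gamma G_h[\chi_{M^+}f_h]$ and $G_hf_{\gamma_+} = Tr_{\gamma_+} G_h[\chi_{M^+}f_h]$ arise from the identical grid function, so its restriction satisfies $[G_hf_\gamma]_{\gamma_+} = G_hf_{\gamma_+}$. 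Substituting these two identities and cancelling the common $G_hf_{\gamma_+}$ on both sides collapses the relation to $v_{\gamma_+} - P_{\gamma_+}v_\gamma = 0$, which is exactly \eqref{eqn:beq}.

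Since every manipulation above is an equivalence --- the map $u_\gamma\mapsto v_\gamma=u_\gamma-G_hf_\gamma$ is an invertible affine change of the density --- the converse direction follows by reading the same chain backwards, so \eqref{eqn:rbep} and \eqref{eqn:beq} are genuinely equivalent rather than related by a single implication. The argument is essentially bookkeeping with the restriction and trace operators, and the only point that warrants care is confirming the compatibility relations $P_{\gamma_+}[\cdot]=[P_\gamma(\cdot)]_{\gamma_+}$ and $[G_hf_\gamma]_{\gamma_+}=G_hf_{\gamma_+}$; once these are checked against the definitions, the result reduces immediately to Theorem~\ref{lem:dp_gh}. I therefore expect the reconciliation of the $\gamma$ versus $\gamma_+$ traces to be the main, and quite mild, obstacle, with no analytic difficulty beyond it.
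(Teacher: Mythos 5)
Your proposal is correct and follows essentially the same route as the paper's own proof: substitute $u_\gamma = v_\gamma + G_hf_\gamma$ into \eqref{eqn:rbep}, use linearity of $P_{\gamma_+}$, and kill the term $P_{\gamma_+}[G_hf_\gamma]$ via Theorem~\ref{lem:dp_gh}, with the converse obtained by running the substitution backwards. Your explicit verification of the compatibility relations $P_{\gamma_+}[\cdot]=[P_\gamma(\cdot)]_{\gamma_+}$ and $[G_hf_\gamma]_{\gamma_+}=G_hf_{\gamma_+}$ is a slightly more careful bookkeeping of a step the paper leaves implicit, but the argument is the same.
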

\begin{proof}
``$\Rightarrow$'': 
By definition of $v_\gamma:=u_\gamma - G_hf_\gamma$, $u_\gamma=v_\gamma+G_hf_\gamma$, then the reduced boundary equation gives
\begin{subequations}
\begin{align}
u_{\gamma_+} - P_{\gamma_+} u_\gamma &= G_hf_{\gamma_+}\\
\Rightarrow v_{\gamma_+}+G_hf_{\gamma_+} - P_{\gamma_+} (v_\gamma+G_hf_\gamma) &= G_hf_{\gamma_+}\\
\Rightarrow v_{\gamma_+} - P_{\gamma_+} (v_\gamma+G_hf_\gamma) &= 0\\
\Rightarrow v_{\gamma_+} - P_{\gamma_+} v_\gamma &= P_{\gamma_+}G_hf_\gamma
\end{align}
\end{subequations}
The last equation is by Lemma~\ref{lem:dp_gh}.

``$\Leftarrow$'': Similarly, from the definition of $v_\gamma$,
\begin{subequations}
\begin{align}
v_{\gamma_+} - P_{\gamma_+} v_\gamma &= 0\\
\Rightarrow u_{\gamma_+} - G_hf_{\gamma_+} - P_{\gamma_+} [u_\gamma - G_hf_\gamma] &=0\\
\Rightarrow u_{\gamma_+} - G_hf_{\gamma_+} - P_{\gamma_+} u_\gamma &=0\\
\Rightarrow u_{\gamma_+} - P_{\gamma_+} u_\gamma &=G_hf_{\gamma_+}
\end{align}
due to Lemma~\ref{lem:dp_gh} $P_{\gamma_+}G_hf_\gamma=0$ as well.
\end{subequations}
\end{proof}

By Proposition~\ref{prop:vgamma}, the boundary systems \eqref{eqn:inhom} is equivalent to
\begin{subequations}\label{eqn:hom}
\begin{align}
v_{\gamma_+} - P_{\gamma_+} v_\gamma &= 0\\
\sum_{x_{jkl}\in\gamma} v_{jkl} \Phi_{jkl}(x^*,y^*,z^*) &= g(x^*,y^*,z^*)-\sum_{x_{jkl}\in\gamma}[G_hf_{\gamma}]_{jkl} \Phi_{jkl}(x^*,y^*,z^*)
\end{align}
\end{subequations}
where we can solve $v_\gamma$. Similar approach has been widely adopted in the boundary integral equation method for nonhomogeneous elliptic equations, see for example \cite{mayo1984fast}.

Once $v_\gamma$ is obtained, the density $u_\gamma$ is given by
\begin{align}
u_\gamma = v_\gamma + G_hf_\gamma,
\end{align}
and the approximation of the numerical solution to \eqref{eqn:elliptic} is given by the following discrete generalized Green's formula:
\begin{align}\label{eqn:discrete_green_u}
u_{jkl} = G_h[\chi_{M^-}L_hu_\gamma] + G_h[\chi_{M^+}f_h], \quad x_{jkl}\in N^+
\end{align}

\begin{remark}
Due to Lemma~\ref{lem:dp_gh}, we can simply replace the inhomogeneous density $u_\gamma$ with the homogeneous density $v_{\gamma}$ in the Green's formula.
\begin{align}\label{eqn:discrete_green_v}
u_{jkl} = G_h[\chi_{M^-}L_hv_\gamma] + G_h[\chi_{M^+}f_h], \quad x_{jkl}\in N^+,
\end{align}
which basically implies that we only need to solve two linear systems: one homogeneous system that accounts for the boundary data and one for the nonhomogeneous source functions.
\end{remark}

The most computationally expensive part in the above approximation of the discrete Generalized Green's formula lies in the construction of the density $u_\gamma$, which relies on the operator $P_{\gamma_+}$. Different approaches have been developed to obtain the operator, such as combining extension operators of Taylor's form with spectral method on the continuous boundary \cite{ryaben2012method,medvinsky2012method,epshteyn2012upwind}, etc., and method based on the Galerkin difference basis functions \cite{xia2023local}.

In \cite{xia2023local}, we demonstrated how to combine local Galerkin difference basis functions with difference potentials method and argued that the computational cost for constructing $P_{\gamma}$ or $P_{\gamma_+}$ would scale like a formidable $\mathcal{O}(N^5\log(N))$ in 3D, since we need to compute the difference potentials at $|\gamma|\sim\mathcal{O}(N^2)$ points in the set $\gamma$ using unit density defined at each point in $\gamma$. Here $N$ is the number of unknowns in either $x-$, $y-$ or $z-$ direction in the uniform mesh for the auxiliary domain. The computation of each difference potential corresponding to the unit density would be $\mathcal{N^3}\log(N)$ if FFT is employed to solve the auxiliary problem. In this work, we will continue using the Galerkin difference basis function and show how to avoid the computational cost of constructing $P_{\gamma}$ or $P_{\gamma_+}$ and how to obtain the operators directly from lattice Green's functions.

\subsection{Difference Potentials in the infinite domain}

In this subsection, we will focus on the homogeneous equation as we discussed in the previous subsection:
\begin{align}
Lv:=-\Delta v+\sigma v = 0, \quad x\in \Omega
\end{align}
in the same domain $\Omega$ of arbitrary shapes and with the same boundary condition as in $\eqref{eqn:discrete}$.

Instead of using the finite auxiliary domain $\Omega^0$, we now turn to the free space $\mathbb{R}^3$ as the auxiliary domain and introduce the infinite lattice ${\mathbb{Z}h}^3$ with spacing $h$. Here we assume that the infinite grid coincide with the finite grid on $M^+$. The point sets $N^\pm, M^\pm$ are defined similarly as in the finite auxiliary domain. The point set $\gamma$, $\gamma_\pm$ will be identically defined. To construct the difference potential operator $\widehat{P}_{\gamma}$ and $\widehat{P}_{\gamma_+}$, we use similar auxiliary problem as in \eqref{eqn:unit_finite_aux_prob} and use the unit density $v_{j^*,k^*,l^*}$. In other words, we solve 
\begin{subequations}\label{eqn:unit_infinite_aux_prob}
\begin{align}
\widehat{L}_h \hat{v}_{jkl} &= \chi_{M^-}\widehat{L}_h\hat{v}_{j^*k^*l^*},\\
\lim_{jkl\rightarrow\infty}v_{jkl} &= 0,
\end{align}
\end{subequations} 
and $\hat{v}_{jkl}=\widehat{G}_h\chi_{M^-}\widehat{L}_h\hat{v}_{j^*k^*l^*}$,
where the ``hat'' denotes that the operator is obtained in the free space. The fundamental solution to \eqref{eqn:unit_infinite_aux_prob} is commonly known as the lattice Green's function. In the case of infinite lattices, we will denote the lattice Green's function as $G_{\sigma,h}(jkl)$ and it satisfies
\begin{align}\label{eqn:lgf_delta}
\widehat{L}_h\widehat{G}_{\sigma,h}(jkl) = \delta_{jkl}
\end{align}
where the discrete delta function $\delta_{jkl}$ is 1 at point $x_{jkl}$ and 0 elsewhere. 

For constant-coefficient linear operators, the second order central finite difference scheme would give the discrete Green's functions in terms of a triple integral when one performs Fourier transform on the infinite lattice and the inverse Fourier transform will give:
\begin{align}\label{eqn:lgf}
\widehat{G}_{\sigma,h}(j,k,l) = \frac{h^2}{(2\pi)^3}\int_{-\pi}^{\pi}\int_{-\pi}^{\pi}\int_{-\pi}^{\pi} \frac{\cos(j\theta_1)\cos(k\theta_2)\cos(l\theta_3)}{6+\sigma h^2  - 2(\cos\theta_1+\cos\theta_2+\cos\theta_3)}d\theta_1d\theta_2d\theta_3
\end{align}

We will focus on the case of $h=1$, as the division by $h^2$ will cancel out in the left and right hand sides of \eqref{eqn:unit_infinite_aux_prob}. Thus we will drop the hat and $h$ in $\widehat{G}_{\sigma,h}(j,k,l)$, and use $G_{\sigma}(j,k,l)$ to denote the lattice Green's function, without risk of confusion. 

For $\sigma=0$, $G_{0}(j,k,l)$ reduces to the classical lattice Green's function and the well-known Watson's integral \cite{joyce2005evaluation,zucker201170}. The evaluation of different types of Watson's integral has been well-studied. For example, $G_{0}(0,0,0)$ is analytically computed as
\begin{align}
G_{0}(0,0,0) = \frac{\sqrt{6}}{96\pi^3}\Gamma\left(\frac{1}{24}\right)\Gamma\left(\frac{5}{24}\right)\Gamma\left(\frac{7}{24}\right)\Gamma\left(\frac{11}{24}\right)
\end{align}
or 
\begin{align}
G_{0}(0,0,0) = \frac{\sqrt{3}-1}{96\pi^3}\left[\Gamma\left(\frac{1}{24}\right)\Gamma\left(\frac{11}{24}\right)\right]^2
\end{align}
as in \cite{zucker201170}. Recursive formulas thus can be derived based on the definition \eqref{eqn:lgf_delta} and the exact value of $G_{0}(0,0,0)$.

Higher-order or non-standard finite difference stencils would give similar expressions of the lattice Green's function in \eqref{eqn:lgf}.  Evaluations of high order versions of the LGF \eqref{eqn:lgf} resulted from various forms of finite difference schemes have been studied in a recent work \cite{gabbard2024lattice}. The algorithm in this work can be extended naturally to high order accuracy once high order lattice Green's functions are obtained.

We will mainly focus on two cases: $\sigma=0$ and $\sigma>0$. 
\begin{itemize}
	\item 
For $\sigma=0$, the integrand in \eqref{eqn:lgf} is singular at the origin, but can be expressed in terms of an integral of Bessel's functions (Watson's transformation)
\begin{align}\label{eqn:lgf_bessel}
G_{0}(j,k,l)=\int_0^\infty\exp(-6t)I_j(2t)I_k(2t)I_l(2t)dt
\end{align}
where 
\begin{align}
I_n(t)  = \frac{1}{2\pi}\int_{-\pi}^{\pi} \exp(t\cos\theta)\cos(n\theta)d\theta
\end{align}
The improper integral \eqref{eqn:lgf_bessel} can be split into two integrals with a large enough $T^*$
\begin{align}
G_{0}(l,m,n) = \int_0^{T^*}\exp(-6t)I_l(2t)I_m(2t)I_n(2t)dt+\int_{T^*}^\infty\exp(-6t)I_l(2t)I_m(2t)I_n(2t)dt
\end{align}
where the first finite integral can be evaluated using quadrature rules and the second integral using asymptotic expansions of the Bessel functions. The asymptotic expansions for large argument $|t|$ and $\mu=4n^2$ (\cite[p375-377]{abramowitz1948handbook}):
\begin{align}
I_n(t) \sim \frac{\exp(t)}{\sqrt{2\pi t}}\left(1-\frac{\mu-1}{8t}+\frac{(\mu-1)(\mu-9)}{2!(8t)^2}-\frac{(\mu-1)(\mu-9)(\mu-25)}{3!(8t)^3}+\cdots\right)
\end{align}

The improper integral \eqref{eqn:lgf_bessel} can also be evaluated using adaptive Gauss Kronrod method. For large magnitude of $(l,m,n)$, asymptotic formula have been established in \cite{martinsson2002asymptotic}. The $q$-term asymptotic expansion of $G_{0}(\bm{n})$ with $\bm{n}=(j,k,l)$ is given by
\begin{align}
G_{0}(\bm{n}) = A_G^q(\bm{n})+\mathcal{O}(|n|^{-2q-1})
\end{align}
For large enough $\bm{n}$, we can take $q=2$:
\begin{align}
A^2_G(\bm{n})=-\frac{1}{4\pi|\bm{n}|}-\frac{j^4+k^4+l^4-3j^2k^2-3j^2l^2-3k^2l^2}{16\pi|\bm{n}|^7}
\end{align}
or $q=3$:
\begin{align}
A_G^3(\bm{n}) = A^2_G(\bm{n})&+\frac{1}{128\pi|\bm{n}|^{13}}\Big(-288(k^2l^2j^4+k^2l^4j^2+k^4l^2j^2)+621(j^4k^4+l^4k^4+j^4l^4)\nonumber\\
&-244(k^2j^6+l^2j^6+k^6j^2+l^6j^2+k^2l^6+k^6l^2)+23(j^8+k^8+l^8)\Big)
\end{align}
As $|\bm{n}|\rightarrow\infty$, $G_{0}(\bm{n})$ behaves like the fundamental solution of the continuous Poisson's equation $-1/4\pi |x|$.

\item For $\sigma>0$, the LGF \eqref{eqn:lgf} is well defined and the spectral accuracy of trapezoidal rule for periodic functions can be utilized:
\begin{subequations}
\begin{align}
G_{\sigma}(j,k,l)=\frac{h^2}{(2\pi)^3}\int_{-\pi}^{\pi}\int_{-\pi}^{\pi}\int_{-\pi}^{\pi} \frac{e^{ij\theta_1}e^{ik\theta_2}e^{il\theta_3}}{6+\sigma h^2  - 2(\cos\theta_1+\cos\theta_2+\cos\theta_3)}d\theta_1d\theta_2d\theta_3\\
\approx\frac{1}{N^3}\sum_{p=-N/2}^{\frac{N}{2}-1}\sum_{q=-\frac{N}{2}}^{\frac{N}{2}-1}\sum_{r=-\frac{N}{2}}^{\frac{N}{2}-1} \frac{e^{ij2\pi p/N}e^{ik2\pi q/N}e^{il2\pi r/N}}{6+\sigma h^2  - 2(\cos(2\pi p/N)+\cos(2\pi q/N)+\cos(2\pi r/N))}
\end{align}
\end{subequations}
Thus 3D FFTs can be employed to speed up evaluations of source-target interactions at multiple instances. For large enough $\sigma$, the values of $G_\sigma$ decays rapidly as $|\bm{n}|$ tends to infinity.

\end{itemize}

With lattice Green's function defined, we are able to solve \eqref{eqn:unit_infinite_aux_prob} with ease.
Notice that the right hand side in the difference equation is highly localized with at most six nonzero entries. Then the solution $\hat{v}_{jkl}$ can be explicitly written as
\begin{align}
\hat{v}_{jkl} =& (-6+\sigma h^2)\chi_{M^-}(p,q,r)G_0(p,q,r)\nonumber\\
&+\chi_{M^-}(p+1,q,r)G_0(p+1,q,r) +\chi_{M^-}(p-1,q,r)G_0(p-1,q,r) \nonumber\\
&+\chi_{M^-}(p,q+1,r)G_0(p,q+1,r) +\chi_{M^-}(p,q-1,r)G_0(p,q-1,r) \nonumber\\
&+\chi_{M^-}(p,q,r+1)G_0(p,q,r+1) +\chi_{M^-}(p,q,r-1)G_0(p,q,r-1) 
\end{align}
for any $x_{jkl}\in \gamma$ with unit density defined at $x_{pqr}\in\gamma$, thus for columns in $\widehat{P}_\gamma$. This significantly reduces the computational cost of the construction of $\widehat{P}_\gamma$ from $\mathcal{O}(N^5\log(N))$ to $\mathcal{O}(N^2)$ when unit density is used.

Likewise, we have the following theorem for homogeneous equations:
\begin{theorem}
The difference equation $\widehat{L}_h\hat{v}_{jkl}=0$ for $x_{jkl}\in M^+$ is equivalent to the boundary equation with projections
\begin{align}\label{eqn:inf_veq}
\hat{v}_{\gamma_+}-\widehat{P}_{\gamma_+}\hat{v}_{\gamma} = 0,
\end{align}
\end{theorem}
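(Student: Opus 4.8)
The plan is to transcribe the finite-domain argument that established the boundary equation \eqref{eqn:bep} and its reduction \eqref{eqn:rbep}, replacing the finite solution operator $G_h$ (inversion of $L_h$ with homogeneous Dirichlet data on $N^0\backslash M^0$) by the free-space operator $\widehat{G}_h$ built from the lattice Green's function $G_\sigma$ with decay at infinity. The starting point is the generalized discrete Green's identity: for any grid function $w$ on the lattice satisfying the decay condition in \eqref{eqn:unit_infinite_aux_prob}, one has $w=\widehat{G}_h\widehat{L}_h w$ because $\widehat{G}_h$ inverts $\widehat{L}_h$ on this class. Splitting the source by $1=\chi_{M^+}+\chi_{M^-}$ on the lattice gives the decomposition $w=\widehat{G}_h[\chi_{M^+}\widehat{L}_h w]+\widehat{G}_h[\chi_{M^-}\widehat{L}_h w]$ on $N^+$, where the second term is by definition the difference potential $\widehat{P}_{N^+\gamma}w_\gamma$.

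For the forward direction I would take $\hat{v}$ with $\widehat{L}_h\hat{v}=0$ on $M^+$, so that $\chi_{M^+}\widehat{L}_h\hat{v}=0$ and the decomposition collapses to $\hat{v}=\widehat{P}_{N^+\gamma}\hat{v}_\gamma$ on $N^+$. Taking $Tr_\gamma$ yields $\hat{v}_\gamma-\widehat{P}_\gamma\hat{v}_\gamma=0$, the infinite-domain analogue of \eqref{eqn:bep} with $f\equiv0$; restricting the trace to $\gamma_+$ and invoking the reduction behind \eqref{eqn:rbep} (which is purely algebraic and independent of the specific solution operator) produces \eqref{eqn:inf_veq}. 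For the converse, given a density $\hat{v}_\gamma$ solving \eqref{eqn:inf_veq} I would set $\hat{v}:=\widehat{P}_{N^+\gamma}\hat{v}_\gamma$ on $N^+$ and verify directly that $\widehat{L}_h\hat{v}=\widehat{L}_h\widehat{G}_h[\chi_{M^-}\widehat{L}_h\hat{v}]=\chi_{M^-}\widehat{L}_h\hat{v}$, which vanishes on $M^+$ since $M^+$ and $M^-$ are disjoint, and that its trace reproduces $\hat{v}_\gamma$ on $\gamma_+$. One also needs the extension-independence of $\widehat{P}_{N^+\gamma}$, which follows exactly as in the proof of Lemma~\ref{lem:dp_gh}.

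The main obstacle is not the algebra, which is identical to the finite case, but justifying that $\widehat{G}_h$ is a genuine two-sided inverse of $\widehat{L}_h$ on the decaying class, so that the identity $w=\widehat{G}_h\widehat{L}_h w$ and the uniqueness steps are legitimate. For $\sigma>0$ this is immediate: the Fourier symbol $6+\sigma h^2-2(\cos\theta_1+\cos\theta_2+\cos\theta_3)\ge\sigma h^2>0$ is bounded away from zero, $\widehat{L}_h$ is boundedly invertible, $G_\sigma$ decays exponentially, and every lattice sum converges absolutely with a unique decaying solution. For $\sigma=0$ the symbol vanishes at the origin and $-\Delta_h$ is not $\ell^2$-invertible; here the argument requires controlling the conditionally convergent sums arising because $G_0(\bm{n})\sim-1/(4\pi|\bm{n}|)$ decays only algebraically, together with a discrete Liouville/maximum-principle statement guaranteeing that a lattice-harmonic function tending to zero at infinity vanishes identically. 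This last fact supplies the uniqueness needed both for extension-independence and for the converse direction, and is the discrete counterpart of the decay hypothesis used for exterior problems in classical potential theory.
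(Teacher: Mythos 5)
The paper offers no proof of this theorem at all---it is dispatched with a citation to Ryaben'kii's book---so your proposal has to be judged on its own merits. Your skeleton (split the source via $1=\chi_{M^+}+\chi_{M^-}$, collapse the decomposition when $\widehat{L}_h\hat v=0$ on $M^+$, reconstruct via the potential for the converse) is the standard difference-potentials argument, and your forward direction is essentially complete. The genuine gap is in the converse. Equation \eqref{eqn:inf_veq} lives on $\gamma_+$ only, while equivalence requires that any density $\hat v_\gamma$ satisfying \eqref{eqn:inf_veq} be the trace on \emph{all} of $\gamma$ of a solution on $N^+$; this is what the subsequent systems need, since $\hat v_{\gamma_-}$ enters both the boundary-condition rows and the final Green's formula. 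Your candidate $\hat v:=\widehat{P}_{N^+\gamma}\hat v_\gamma$ does satisfy $\widehat{L}_h\hat v=0$ on $M^+$, and \eqref{eqn:inf_veq} says precisely $Tr_{\gamma_+}\hat v=\hat v_{\gamma_+}$, but you never show $Tr_{\gamma_-}\hat v=\hat v_{\gamma_-}$; without that, $\hat v_\gamma$ need not be the trace of any solution. This missing step is exactly the nontrivial direction of the reduction from $\gamma$ to $\gamma_+$ (the infinite-lattice analogue of passing from \eqref{eqn:rbep} back to \eqref{eqn:bep}), and your claim that this reduction is ``purely algebraic and independent of the specific solution operator'' is wrong---it is the one place where the solution operator genuinely enters. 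Concretely, with $w$ the zero extension of $\hat v_\gamma$, the function $z:=\hat v-w$ satisfies $\widehat{L}_h z=0$ on $M^-$ (stencils of $M^-$ points involve only $M^-\cup\gamma_+$), $z=0$ on $\gamma_+$ by \eqref{eqn:inf_veq}, and $z\to0$ at infinity (as $\hat v$ is a finite combination of translates of $G_\sigma$); you must prove uniqueness for this \emph{exterior} discrete Dirichlet problem to conclude $z\equiv0$ on $M^-$, hence on $\gamma_-$. That exterior maximum-principle/uniqueness statement---using the decay $G_0(\bm{n})\sim-1/(4\pi|\bm{n}|)$ when $\sigma=0$---is where your ``discrete Liouville'' tool must actually be deployed; you invoke it only for whole-lattice harmonic functions and for extension-independence, so as written the converse is incomplete.

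A secondary point: what you single out as the main obstacle---justifying $w=\widehat{G}_h\widehat{L}_h w$ on a decaying class, with conditionally convergent sums when $\sigma=0$---is avoidable. In both directions you can work exclusively with compactly supported zero extensions: then $\chi_{M^\pm}\widehat{L}_h\tilde v$ are compactly supported, every sum defining $\widehat{G}_h[\chi_{M^\pm}\widehat{L}_h\tilde v]$ is finite, and the identity $\widehat{G}_h[\widehat{L}_h\tilde v]=\tilde v$ follows pointwise from $\widehat{L}_h G_\sigma=\delta$ in \eqref{eqn:lgf_delta} together with summation by parts over finitely many nonzero terms, using the symmetry of the stencil. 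No conditional convergence and no whole-lattice Liouville theorem is needed there; the only piece of genuine lattice potential theory required by this theorem is the exterior uniqueness identified above.
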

which we will not prove as this is a classical result in \cite{ryaben2012method}.

Replacing the boundary equations with projections in \eqref{eqn:hom} with \eqref{eqn:inf_veq} and $v_\gamma$ with $\hat{v}_\gamma$, we arrive at the following system of equations:
\begin{subequations}\label{eqn:inf_hom}
\begin{align}
\hat{v}_{\gamma_+} - \widehat{P}_{\gamma_+} \hat{v}_\gamma &= 0\\
\sum_{x_{jkl}\in\gamma} \hat{v}_{jkl} \Phi_{jkl}(x^*,y^*,z^*) &= g(x^*,y^*,z^*)-\sum_{x_{jkl}\in\gamma}[G_hf_{\gamma}]_{jkl} \Phi_{jkl}(x^*,y^*,z^*)
\end{align}
\end{subequations}
The difference between \eqref{eqn:hom} and \eqref{eqn:inf_hom} lies in the difference potentials operator $\widehat{P}_{\gamma_+}$ and $P_{\gamma_+}$, where the former with ``hat'' can be obtained with much less computational cost.

Now we establish the equivalence between \eqref{eqn:hom} and \eqref{eqn:inf_hom}.

\begin{theorem}
The densities $v_\gamma$ and $\hat{v}_\gamma$ from solving \eqref{eqn:hom} and \eqref{eqn:inf_hom} are identical, if the meshes align.
\end{theorem}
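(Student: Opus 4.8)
The plan is to show that the two systems \eqref{eqn:hom} and \eqref{eqn:inf_hom} differ only in the representation of the boundary-projection operator ($P_{\gamma_+}$ versus $\widehat P_{\gamma_+}$), while constraining their respective densities to the \emph{same} linear solution set; since the remaining (boundary-condition) equation is shared verbatim, well-posedness then forces the densities to coincide.

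First, I would invoke the fundamental equivalence of the difference-potentials framework in the homogeneous case. On the finite side, the theorem yielding \eqref{eqn:rbep} (with $f=0$, via Proposition~\ref{prop:vgamma}) states that a grid function $v_\gamma$ on $\gamma$ satisfies $v_{\gamma_+}-P_{\gamma_+}v_\gamma=0$ if and only if $v_\gamma$ is the trace on $\gamma$ of some grid function $v$ on $N^+$ obeying $L_h v=0$ on $M^+$. On the infinite side, the theorem giving \eqref{eqn:inf_veq} is the analogous statement for $\widehat L_h$ and $\widehat P_{\gamma_+}$. Thus each projection equation is merely an algebraic encoding of membership in the trace space of homogeneous interior solutions; I will call these linear spaces $V$ and $\widehat V$, respectively.

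Next comes the crux: I would argue $V=\widehat V$. Because the infinite lattice coincides with the finite grid on $M^+$ and the same $7$-point stencil with the same $\sigma$ is used in both settings, the operators $L_h$ and $\widehat L_h$ act identically on any grid function restricted to $M^+$ together with its stencil neighbors in $N^+$. Consequently the system ``$L_h v=0$ for $x_{jkl}\in M^+$'', read as equations for the unknown values of $v$ on $N^+$, is \emph{literally the same} finite linear system as ``$\widehat L_h\hat v=0$ for $x_{jkl}\in M^+$''. Its solution space of interior grid functions is therefore identical, and so is its trace on $\gamma$, giving $V=\widehat V$. Note this holds even though $P_{\gamma_+}\neq\widehat P_{\gamma_+}$ as operators: the auxiliary-domain choice (finite cube with homogeneous boundary values versus free space with decay) changes how the projection is represented, but not which densities it annihilates. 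Finally I would compare the full systems: the second equations in \eqref{eqn:hom} and \eqref{eqn:inf_hom} are identical, using the same nodal basis $\Phi_{jkl}$, the same data $g$, and the same finite-domain particular-solution term $G_hf_\gamma$. Hence both systems amount to selecting the element of the common space $V=\widehat V$ that satisfies this shared closure, and since the assembled $|\gamma|\times|\gamma|$ system is square and well-posed, that element is unique, so $v_\gamma=\hat v_\gamma$.

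The main obstacle is the middle step, establishing $V=\widehat V$ rigorously, because one is comparing the kernels of two genuinely different operators. The argument must route through the equivalence theorems and the observation that the interior difference equation on $M^+$ is auxiliary-domain-independent, rather than attempting any entrywise comparison of $P_{\gamma_+}$ and $\widehat P_{\gamma_+}$. A secondary technical check is that the trace-space characterization is \emph{exact} in both directions, so that no spurious density satisfies a projection equation without extending to an interior homogeneous solution; this is precisely where one relies on the uniqueness built into the finite and free-space auxiliary problems \eqref{eqn:unit_finite_aux_prob} and \eqref{eqn:unit_infinite_aux_prob}.
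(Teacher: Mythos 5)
Your proposal is correct and takes essentially the same approach as the paper: both arguments reduce the projection equations $v_{\gamma_+}-P_{\gamma_+}v_\gamma=0$ and $\hat v_{\gamma_+}-\widehat P_{\gamma_+}\hat v_\gamma=0$ to the common interior system $L_h v=0$ on $M^+$ (using that $L_h=\widehat L_h$ there when the meshes align), and then conclude from well-posedness of the closed system with the identical boundary closure \eqref{eqn:bc}. The only cosmetic difference is that the paper finishes by subtracting the two solutions and noting $w=v-\hat v$ solves a homogeneous well-posed system, whereas you phrase the same step as uniqueness of the element of the common trace space $V=\widehat V$ satisfying the shared closure.
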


\begin{proof}
$v_{\gamma_+} - P_{\gamma_+} \hat{v}_\gamma = 0$ is equivalent to $L_hv_{jkl}=0$ for $x_{jkl}\in M^+$, thus \eqref{eqn:hom} is equivalent to
\begin{subequations}
\begin{align}
L_hv_{jkl}&=0,\quad x\in M^+\\
\sum_{x_{jkl}\in\gamma} v_{jkl} \Phi_{jkl}(x^*,y^*,z^*) &= g(x^*,y^*,z^*)-\sum_{x_{jkl}\in\gamma}[G_hf_{\gamma}]_{jkl} \Phi_{jkl}(x^*,y^*,z^*)
\end{align}
\end{subequations}

and $\hat{v}_{\gamma_+} - \widehat{P}_{\gamma_+} \hat{v}_\gamma = 0$ is equivalent to  $\widehat{L}_h\hat{v}_{jkl}=0$ for $x_{jkl}\in M^+$, thus  \eqref{eqn:inf_hom} is equivalent to 
\begin{subequations}
\begin{align}
\widehat{L}_h\hat{v}_{jkl} &= 0,\quad x\in M^+\\
\sum_{x_{jkl}\in\gamma} \hat{v}_{jkl} \Phi_{jkl}(x^*,y^*,z^*) &= g(x^*,y^*,z^*)-\sum_{x_{jkl}\in\gamma}[G_hf_{\gamma}]_{jkl} \Phi_{jkl}(x^*,y^*,z^*)
\end{align}
\end{subequations}
Notice that $L_h=\widehat{L}_h$ in $M^+$, the difference of $v$ and $\hat{v}$ satisfies 
\begin{align}
L_h (v-\hat{v})&=0,\\
\sum_{x_{jkl}\in\gamma} [v_{jkl}-\hat{v}_{jkl}] \Phi_{jkl}(x^*,y^*,z^*) &=0.
\end{align}
With well-posedness assumed, we conclude that $v_{jkl}=\hat{v}_{jkl}$ in $N^+$, thus also in $\gamma$.
\end{proof}

Once $\hat{v}_\gamma$ is obtained by solving \eqref{eqn:inf_hom}, the approximate solution in $M^+$ is given by the discrete generalized Green's formula:
\begin{align}\label{eqn:discrete_green}
u_{jkl} = G_h[\chi_{M^-}L_h\hat{v}_\gamma]+G_h[\chi_{M^+}f_h],\quad x_{jkl}\in M^+
\end{align}
where we only replaced $v_\gamma$ with $\hat{v}_\gamma$ in the discrete generalized Green's formula \eqref{eqn:discrete_green_v}.

To summarize, the operator $\widehat{P}_\gamma$ (essentially $\widehat{P}_{\gamma_+}$) is constructed using explicit knowledge of the lattice Green's function, thus the excessive computational cost mentioned in \cite{xia2023local} for computing the difference potentials operator in 3D is avoided. Only two instances that involve the operator $G_h$ in the finite auxiliary domain will be based on 3D FFTs in \eqref{eqn:discrete_green}.
\section{Boundary Algebraic Equations}\label{sec:bae}
In this section we focus on the reduced boundary equations with projections or the boundary algebraic equations \eqref{eqn:inf_veq}. We shall decompose the operator $\widehat{P}_{\gamma_+} $ in \eqref{eqn:beq} into two parts $P_+$ and $P_{-}$ that correspond to columns in $\gamma_+$ and $\gamma_-$:
\begin{align}
v_{\gamma_+} - \left[\begin{array}{cc}P_+ & P_-\end{array}\right]\left(\begin{array}{cc}v_{\gamma_+} & v_{\gamma_-}\end{array}\right)^T = 0
\end{align}
This gives a relation between $v_{\gamma_+}$ and $v_{\gamma_-}$:
\begin{align}
(I-P_+)v_{\gamma_+} = P_- v_{\gamma_-}\label{eqn:beq_bae}
\end{align}
where $P_+$ is a square matrix of size $|\gamma_+|\times|\gamma_+|$, $P_-$ is of size $|\gamma_+|\times|\gamma_-|$. Note that $P_+$ and $P_-$ are both dense. Even with the lattice Green's function, the computation of $P_+$ and $P_-$ and their storage can be expensive in 3D. Now we aim to establish some boundary algebraic equations like \eqref{eqn:beq_bae} directly from the lattice Green's function. The BAE \eqref{eqn:beq_bae} can also be regarded as the discrete version of Dirichlet to Neumann (DtN) map.

\subsection{Direct formulation}
Similarly to the boundary integral method, we will introduce the concept of direct and indirect formulations, where direct formulation is based on Green's third identity while indirect formulation is not. In this classification, the boundary algebraic equation \eqref{eqn:beq_bae} is a direct formulation. To see this, we introduce a lattice version of Green's second identity from \cite[Sec. 5, Lemma 1]{duffin1953discrete}:
\begin{lemma}
Let $u$ and $v$ be lattice/grid functions and let $E$ be a finite set of lattice points, then
\begin{align}
\sum_E (vLu - uLv) = \sum_S[v(p)u(q)-u(p)v(q)]
\end{align}
where $\sum_E$ denotes the summation over $E$, and $\sum_S$ denotes the summation over the set $S$ of pairs of points $(p,q)$ that are neighbors, $p$ being in $E$ and $q$ being in the complement of $E$.
\end{lemma}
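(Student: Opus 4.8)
The plan is to prove this as a discrete summation-by-parts identity, the exact lattice analogue of the continuous Green's second identity $\int_E(v\Delta u - u\Delta v)\,dx = \int_{\partial E}(v\,\partial_n u - u\,\partial_n v)\,ds$. The guiding principle is that, because $L$ is a symmetric nearest-neighbour operator, the pointwise quantity $vLu - uLv$ carries no diagonal contribution and collapses into a sum of antisymmetric ``edge'' terms; summing over $E$ then cancels every edge internal to $E$ and leaves only the edges crossing from $E$ into its complement.

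First I would record $L$ in stencil form. Writing $y\sim x$ for ``$y$ is a lattice neighbour of $x$'', the seven-point operator has the structure $Lu(x) = a\,u(x) + \sum_{y\sim x}c_{xy}\,u(y)$ with a constant diagonal weight $a$ and \emph{symmetric} off-diagonal weights $c_{xy}=c_{yx}$ (for the central-difference Laplacian all $c_{xy}$ are equal). A direct computation then gives, at each lattice point $x$,
\begin{align}
v(x)Lu(x) - u(x)Lv(x) = \sum_{y\sim x} c_{xy}\bigl[v(x)u(y) - u(x)v(y)\bigr],
\end{align}
because the diagonal terms $a\,v(x)u(x)$ and $a\,u(x)v(x)$ cancel identically. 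This cancellation is exactly why the identity is independent of the reaction coefficient $\sigma$: any diagonal multiplier enters $vLu$ and $uLv$ symmetrically and drops out.

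Next I would sum this pointwise identity over the finite set $E$ and interchange the (finite) sums to obtain a double sum over ordered neighbour pairs $(x,y)$ with $x\in E$. I would split this sum according to whether the neighbour $y$ lies in $E$ or in its complement $E^c$. For an edge with both endpoints in $E$, the unordered pair $\{x,y\}$ is visited twice --- once as $(x,y)$ and once as $(y,x)$ --- and since $c_{xy}=c_{yx}$ while the bracket $v(x)u(y)-u(x)v(y)$ is antisymmetric under swapping $x\leftrightarrow y$, these two contributions cancel. Hence every interior edge contributes zero, and only the pairs $(p,q)$ with $p\in E$, $q\in E^c$, $q\sim p$ survive, which is precisely the boundary set $S$. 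With the unit off-diagonal weights of the standard stencil this yields $\sum_S[v(p)u(q)-u(p)v(q)]$, as claimed.

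I expect the only real obstacle to be bookkeeping rather than analysis: one must argue carefully that in the reordered double sum every edge internal to $E$ is indeed generated in both orientations (so the antisymmetric cancellation is exact), whereas a boundary edge is generated only once because the outer summation ranges over $E$ alone. Finiteness of $E$ guarantees all rearrangements are legitimate and no convergence questions arise. The structural inputs that make the argument work --- symmetry of the off-diagonal stencil weights and cancellation of the diagonal term --- are the discrete counterparts of the self-adjointness of $\Delta$ and the product-rule cancellation underlying the continuous identity.
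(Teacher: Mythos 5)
Your proof is correct. The paper itself offers no proof of this lemma --- it is quoted directly from Duffin (Sec.~5, Lemma~1 of the cited work) --- and your argument is precisely the standard one: write $L$ in stencil form with symmetric off-diagonal weights, observe that the diagonal terms (hence any reaction coefficient $\sigma$) cancel pointwise in $vLu-uLv$, sum over the finite set $E$, and note that each interior edge is visited in both orientations so its antisymmetric contributions cancel, while each boundary edge $(p,q)$ with $p\in E$, $q\notin E$ is visited exactly once; this is essentially Duffin's own proof, and your bookkeeping (both orientations for interior edges, one for boundary edges, finiteness legitimizing all rearrangements) is exactly the care the argument requires. One convention point worth flagging: the sign of the stated right-hand side corresponds to off-diagonal weights $c_{xy}=+1$ (Duffin's discrete Laplacian), whereas the paper's operator $L_h=-\Delta_h+\sigma$ has off-diagonal weights $-1$, so for $L_h$ the bracket appears with the opposite sign --- a discrepancy your general-$c_{xy}$ computation makes transparent and which is immaterial to the paper's subsequent use of the identity.
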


Now we choose a lattice harmonic function $u$ such that $Lu = 0$ and $v(m)=G(m-p)$ the lattice Green's function of the difference operator $L$ at peaked at $p$. Let $p$ be any point in $\gamma_+$ and $q$ be any point in $\gamma_-$ and focus on set $p$, then the lattice version of Green's third identity would be 
\begin{align}\label{eqn:direct_bae}
v_{\gamma_+} - G_{\gamma_-}v_{\gamma_+} = -G_{\gamma_+}v_{\gamma_-}
\end{align}
which is exactly the form of BAE \eqref{eqn:beq_bae}, if one imposes $P_+:=G_{\gamma_-}$ and $P_-:=-G_{\gamma_+}$ that denote the pair interactions of $\gamma_+$ and $\gamma_-$. The matrix $G_{\gamma_-}$ needs proper reduction from $\gamma_-$ to $\gamma_+$. For example, if $q_1$ and $q_2$ both are neighbors of $p$, then the coefficient of $u(p)$ will be the addition of $G(q_1-p)$ and $G(q_2-p)$. Similarly, the matrix $G_{\gamma_+}$ will need to be extended from $\gamma_+$ to $\gamma_-$, as at both $q_1$ and $q_2$, $u(q)$ has the same coefficient.

\subsection{Indirect formulation}
As in the indirect boundary integral approach, we investigate indirect boundary algebraic method. We follow the work of boundary algebraic equations \cite{martinsson2009boundary}, but allow misalignment of grid points with the continuous boundary $\partial\Omega$.

We will start with the kernel for single layer potential:
\begin{align}\label{eqn:single}
S(m,n) = G_\sigma(|m-n|),\quad m,n\in \mathbb{Z}^3
\end{align} 
where $G_\sigma$ is the lattice Green's function in the free space.

Note that unlike the continuous case (the boundary integral method), no singularity appears when $m=n$ in \eqref{eqn:single}. The single layer potential for any density $q$ defined on $\gamma_-$ (source) is
\begin{align}\label{eqn:single_potential}
v_m=\sum_{x_n\in\gamma_-}S(m,n)q(n),\quad x_m\in N^+
\end{align}
where the target is any point in $N^+$.

\begin{proposition}\label{prop:single_potential}
The single layer potential \eqref{eqn:single_potential} satisfies the homogeneous difference equations $L_hv_m=0$ for $x_m\in M^+$.
\end{proposition}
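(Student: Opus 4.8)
The plan is to exploit the defining property of the lattice Green's function, namely $\widehat{L}_h \widehat{G}_{\sigma,h} = \delta$ from \eqref{eqn:lgf_delta}, together with the geometric fact that the source set $\gamma_-$ and the target set $M^+$ are disjoint. Because $\gamma_-$ is a finite collection of lattice points, the sum in \eqref{eqn:single_potential} is finite and the discrete operator $L_h$ may be commuted inside the summation by linearity. The single layer potential is built from a constant-coefficient (translation invariant) operator, so applying $L_h$ in the target index $m$ reduces to evaluating $L_h$ on the kernel $S(m,n)$ from \eqref{eqn:single}, one source at a time.

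First I would establish that for each fixed source $n$, applying $L_h$ in the target variable to $S(m,n)$ produces a Kronecker delta. Since $L_h$ is translation invariant and $G_\sigma$ satisfies \eqref{eqn:lgf_delta}, shifting the peak of the fundamental solution from the origin to $n$ gives $L_h^{(m)} G_\sigma(m-n) = \delta_{m-n}$, which equals $1$ when $m=n$ and $0$ otherwise. This step is essentially a restatement of the fact that $G_\sigma$ is the fundamental solution of $L_h$ on the infinite lattice; the only care needed is to note that the full lattice Green's function is defined on all of $\mathbb{Z}^3$, so every point in the seven-point stencil centered at $m$ lies in the domain of the kernel.

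Next I would carry the delta through the sum:
\begin{align}
L_h v_m = \sum_{x_n\in\gamma_-} \left[L_h^{(m)} S(m,n)\right] q(n) = \sum_{x_n\in\gamma_-} \delta_{m-n}\, q(n).
\end{align}
The decisive observation is then the disjointness of the point sets: the source points satisfy $n \in \gamma_- \subset M^-$ (the exterior discrete boundary), whereas the evaluation is requested at targets $x_m \in M^+$ (the interior of $\Omega$). Consequently $m \neq n$ for every term in the sum, each $\delta_{m-n}$ vanishes, and therefore $L_h v_m = 0$ for all $x_m \in M^+$, as claimed.

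I expect no genuine obstacle in this argument; the proof is essentially a bookkeeping of where the Kronecker delta can be nonzero. The one point that must be stated carefully is the inclusion $\gamma_- \subset M^-$ and its disjointness from $M^+$, since this is precisely what forces the singular contribution $m=n$ to fall outside the range of admissible targets. This is also consistent with the picture that the density $q$ lives on $\gamma_-$: if one instead evaluated $v_m$ at a source point $m \in \gamma_-$, the delta would contribute and the homogeneous equation would no longer hold there.
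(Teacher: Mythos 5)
Your proof is correct and follows essentially the same route as the paper's: commute $L_h$ through the finite sum, use the fundamental-solution property \eqref{eqn:lgf_delta} (which the paper spells out by writing the seven-point stencil explicitly, while you invoke translation invariance) to reduce each term to a Kronecker delta, and conclude from the disjointness of the source set $\gamma_-$ and the target set $M^+$ that every delta vanishes. No gaps; your explicit emphasis on $\gamma_-\subset M^-$ being disjoint from $M^+$ is exactly the paper's closing observation.
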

\begin{proof}
For $x_m\in M^+$,
\begin{subequations}
\begin{align}
L_h v_m =& L_h\sum_{x_n\in\gamma_-}S(m,n)q(n)\\
=&\sum_{x_n\in\gamma_-}\Big[(-6+\sigma h^2)S(m,n)+S(m+e_1,n)+S(m-e_1,n)\nonumber\\
&+S(m+e_2,n)+S(m-e_2,n)+S(m+e_3,n)+S(m-e_3,n)\Big]q(n)\\
=&\sum_{x_n\in\gamma_n}\delta(m,n)q(n)\\
=&0
\end{align}
as $m$ and $n$ belong to disjoint sets. Here $e_i$, $(i=1,2,3)$ are unit vectors in the $x,y,z$ directions respectively.
\end{subequations}
\end{proof}

Now assume for the source defined on $\gamma_-$ with single layer density $q_{s}$, we look at target sets $\gamma_+$ and $\gamma_-$ with matrix-vector notation respectively:
\begin{align}
v_{\gamma_+} =S_{+}q_{s},\quad v_{\gamma_-} =S_{-}q_{s}
\end{align}
where $S_+$ is of size $|\gamma_+|\times|\gamma_-|$ and $S_-$ is of size $|\gamma_-|\times|\gamma_-|$.
Then $v_{\gamma_-}$ and $v_{\gamma_+}$ are related through the unknown density $q_{s}$, i.e.,
\begin{align}
v_{\gamma_+} = S_{+}S^{-1}_{-}v_{\gamma_-}.
\end{align}

\begin{remark}
The set  $\gamma_-$ can be regarded as the grid boundary of the set $M^+$ and spectral properties of $S_-$ in \cite{martinsson2009boundary} can be applied. In addition, $S_-$ is invertible, symmetric and diagonally dominant, which is useful for theoretical studies.
\end{remark}

Now for the double layer kernel, we adopt the same definition in \cite{martinsson2009boundary}:
\begin{align}
    D(m,n):=\sum_{k\in \mathbb{D}_n} G_\sigma(|m-n|) - G_\sigma(|m-k|)
\end{align}
where $G_\sigma$ is the free space lattice Green's function and $\mathbb{D}_n$ is the set of nodes outside of $N^+$ but connected to source node $n$. The set $\gamma_-$ is regarded as the grid boundary for $M^+$. 

The double layer potentials for a grid density $q_d$ defined on $\gamma_-$ is 
\begin{align}\label{eqn:double_potential}
v_m=\sum_{x_n\in\gamma_-}D(m,n)q_d(n),\quad x_m\in N^+,
\end{align}
where $q_d$ denotes the double layer density.

\begin{proposition}
The double layer potential \eqref{eqn:double_potential} satisfies the homogeneous difference equations $L_hv_m=0$ for $x_m\in M^+$.
\end{proposition}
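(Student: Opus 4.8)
The plan is to mirror the proof of Proposition~\ref{prop:single_potential} almost verbatim, exploiting the fact that the double layer kernel $D(m,n)$ is, for each fixed source index $n$, a finite superposition of shifted free-space lattice Green's functions, and that the discrete operator $L_h$ acts linearly and only on the target variable $m$. First I would commute $L_h$ past the two finite sums defining \eqref{eqn:double_potential} and the kernel $D(m,n)$. Since both sums are finite and $L_h$ is linear in the target index, this is immediate and yields, for $x_m\in M^+$,
\[
L_h v_m = \sum_{x_n\in\gamma_-}\left(\sum_{k\in \mathbb{D}_n}\left[L_hG_\sigma(|m-n|) - L_hG_\sigma(|m-k|)\right]\right)q_d(n).
\]

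Next I would invoke the defining property of the lattice Green's function, the $h=1$ normalization of \eqref{eqn:lgf_delta}, namely $L_hG_\sigma(|m-p|)=\delta(m,p)$ for any fixed lattice point $p$. Applying this to both the ``source'' term at $n$ and the ``neighbor'' terms at $k$ gives
\[
L_h D(m,n) = \sum_{k\in \mathbb{D}_n}\left[\delta(m,n)-\delta(m,k)\right].
\]
The decisive step is then a set-membership check for a target $x_m\in M^+$. Because $\gamma_-$ lies in the exterior of $\Omega$, we have $n\notin M^+$, so $\delta(m,n)=0$; and because each $k\in\mathbb{D}_n$ is by definition a node lying outside $N^+\supseteq M^+$, we have $k\notin M^+$, so $\delta(m,k)=0$ as well. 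Hence every summand vanishes, $L_hD(m,n)=0$ for each $x_n\in\gamma_-$, and therefore $L_h v_m=0$ throughout $M^+$.

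The only point requiring care---the main (if mild) obstacle---is confirming that both families of indices are simultaneously disjoint from $M^+$: the single ``positive'' peak at $n$ and the cluster of ``negative'' peaks at its exterior neighbors $k$ must all avoid the interior target set. This is guaranteed precisely by the definitions, $\gamma_-\subset M^0\setminus M^+$ and $\mathbb{D}_n\cap N^+=\emptyset$, so that the seven-point stencil of $L_h$ centered at an interior point never straddles any of the Dirac peaks. Conceptually this is the discrete analogue of the classical statement that the continuous double layer potential is harmonic (here, $L$-harmonic) in the open region away from the boundary; no singular-integral subtlety arises because, as already noted for the single layer, the lattice Green's function is finite at coincidence and the discrete operator only detects a peak when the target coincides with a source node.
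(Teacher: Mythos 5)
Your proposal is correct and is essentially the proof the paper intends: the paper explicitly omits the double layer case as ``similar to the proof of Proposition~\ref{prop:single_potential},'' and your argument---commuting $L_h$ through the finite sums, applying $L_hG_\sigma(|m-p|)=\delta(m,p)$ to the peaks at $n\in\gamma_-$ and $k\in\mathbb{D}_n$, and verifying that both index families avoid $M^+$ (since $\gamma_-\subset M^-$ and $\mathbb{D}_n\cap N^+=\emptyset$)---is precisely that single-layer computation adapted to the kernel $D(m,n)$. The only cosmetic imprecision is the remark about the stencil ``straddling'' a peak: what matters is solely that the target $m$ never coincides with a peak location, since $L_hG_\sigma$ evaluated at $m$ is $\delta(m,p)$ regardless of whether neighbors of $m$ touch $p$.
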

We will omit the proof here as it is similar to the proof of Proposition \ref{prop:single_potential}.

With restriction to $\gamma_+$ and $\gamma_-$ respectively, we have
\begin{align}
v_{\gamma_+}=D_+q_{d},\quad v_{\gamma_-}=D_-q_{d}
\end{align}
If the square matrix $D_-$ is invertible, we also have
\begin{align}
v_{\gamma_+} = D_+D_{-}^{-1}v_{\gamma_-}.
\end{align}
from similar arguments in the single layer case.

\begin{proposition}
The following relations hold:
\begin{align}\label{eqn:equal}
(I-P_+)^{-1}P_- = S_+S_{-}^{-1}=D_+D_{-}^{-1}
\end{align}
\end{proposition}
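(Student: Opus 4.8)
The plan is to recognize that all three operators appearing in \eqref{eqn:equal} are representations of one and the same linear map, namely the discrete harmonic extension operator $T$ that sends exterior grid-boundary data $v_{\gamma_-}$ to the interior grid-boundary trace $v_{\gamma_+}$ of the unique grid function $v$ on $N^+$ satisfying $L_h v = 0$ on $M^+$. Once this single object is isolated, each equality reduces to checking that the corresponding expression computes $T v_{\gamma_-}$ for arbitrary data $v_{\gamma_-}$, so the whole proposition collapses to an identification of three formulas with one map.

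First I would establish the well-posedness fact on which everything rests: the discrete problem of finding $v$ on $N^+$ with $L_h v = 0$ at every point of $M^+$ and prescribed values on $\gamma_-$ has a unique solution. This is a square system, since the unknowns on $M^+$ are matched one-to-one by the equations on $M^+$ while the $\gamma_-$ values enter as Dirichlet data; uniqueness then follows from the discrete maximum principle for the diagonally dominant operator $L_h$ with $\sigma \ge 0$. Consequently the map $T\colon v_{\gamma_-}\mapsto v_{\gamma_+}$ is well defined and linear. Next I would identify each formula with $T$. For the direct formulation, \eqref{eqn:beq_bae} is by construction equivalent to $L_h v = 0$ on $M^+$, so solving it for $v_{\gamma_+}$ gives $v_{\gamma_+}=(I-P_+)^{-1}P_- v_{\gamma_-}=T v_{\gamma_-}$, provided $(I-P_+)$ is invertible, which is precisely the same well-posedness. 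For the single layer, given any target data $w=v_{\gamma_-}$ I would set $q_s=S_-^{-1}w$ using invertibility of $S_-$ from the preceding remark; the single-layer potential built from $q_s$ is discrete harmonic on $M^+$ by Proposition~\ref{prop:single_potential} and has exterior trace $S_- q_s=w$, so by uniqueness its interior trace $S_+ q_s=S_+ S_-^{-1}w$ coincides with $Tw$. Since $w$ is arbitrary, $S_+ S_-^{-1}=T$, and the double-layer case is verbatim the same argument using harmonicity of the double-layer potential and invertibility of $D_-$ to give $D_+ D_-^{-1}=T$. Chaining the three identifications yields \eqref{eqn:equal}.

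The hard part is not the algebra but the two solvability ingredients: the unique solvability of the discrete harmonic extension (equivalently, invertibility of $I-P_+$) and the invertibility of the square blocks $S_-$ and $D_-$. The former is the genuine crux, as it is exactly what lets three distinct density representations collapse onto a single trace-to-trace map; I would justify it through the maximum principle and diagonal dominance of $L_h$. Invertibility of $S_-$ is supplied by the cited spectral properties, while for $D_-$ it is taken as a hypothesis (as already flagged before the proposition), so I would state the last equality $D_+ D_-^{-1}=T$ as holding whenever $D_-$ is nonsingular.
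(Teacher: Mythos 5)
Your proposal is correct and takes essentially the same route as the paper, whose justification of \eqref{eqn:equal} is precisely the observation that once the values on $\gamma_-$ are prescribed, the difference equation $L_h u_h = 0$ on $M^+$ admits a unique solution, so that $(I-P_+)^{-1}P_-$, $S_+S_-^{-1}$, and $D_+D_-^{-1}$ all realize the same map $v_{\gamma_-}\mapsto v_{\gamma_+}$. Your write-up simply makes explicit the solvability ingredients (discrete maximum principle for $L_h$ with $\sigma\geq 0$, and the resulting invertibility of $I-P_+$, together with that of $S_-$ and, as hypothesized, $D_-$) that the paper leaves implicit.
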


The equivalence~\eqref{eqn:equal} essentially connects point set $\gamma_{+}$ with point set $\gamma_{-}$, which also agrees with the difference equation:
\begin{align}
L_hu_h = 0, \quad (x_i,y_j,z_k)\in M^+.
\end{align}
Once the lattice boundary values at $\gamma_{-}$ are set, the above difference equation admits a unique solution, hence values at $\gamma_{+}$ are uniquely determined.

\begin{remark}
The involved matrices $P_\pm$, $S_\pm$ and $D_\pm$ are readily obtainable from the lattice Green's function without explicitly solving the auxiliary problem, which is desirable when designing matrix-free algorithms.
\end{remark}

\begin{remark}
The infinite lattice only comes into play conceptually when we compute the values of lattice Green's functions.
\end{remark}

\subsection{Spectral property}

In complete analogous to the 2D results in \cite{martinsson2009boundary}, we state the spectral property of the matrices $S_-$ and $D_-$.

\begin{theorem}
Let $\Omega_h$ be a connected domain in $\mathbb{Z}$ whose boundary $\gamma_-$ contains $N$ boundary nodes. Let $S_-$ be the $N\times N$ ($N=|\gamma_-|$) matrix corresponding to the linear system equivalent to the single-layer potential in 3D. Then, any singular value $\lambda$ of $S_-$ satisfies:
\begin{align}
\frac{1}{12} \leq  \lambda \leq CN
\end{align}
where $C$ is a dimension-independent constant.
\end{theorem}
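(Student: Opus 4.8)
The plan is to exploit that $S_-$ is real symmetric and positive definite, so that its singular values coincide with its eigenvalues and both bounds can be read off the Rayleigh quotient $q^{T}S_-q/\|q\|^2$ over densities $q$ supported on $\gamma_-$. The starting point is the identity, valid because $\widehat{L}_h G_\sigma=\delta$, that the single layer potential $v=S q$ solves $\widehat{L}_h v=q$ on all of $\mathbb{Z}^3$ once $q$ is extended by zero off $\gamma_-$. Since $q$ is finitely supported, summation against $v$ collapses onto $\gamma_-$ and, by Parseval, gives
\begin{align}
q^{T}S_-q=\sum_{x_m\in\gamma_-}q(m)\,v(m)=\langle q,\widehat{L}_h^{-1}q\rangle=\frac{1}{(2\pi)^3}\int_{[-\pi,\pi]^3}\frac{|\hat q(\theta)|^2}{\widehat{L}(\theta)}\,d\theta,
\end{align}
where $\widehat{L}(\theta)=6+\sigma-2(\cos\theta_1+\cos\theta_2+\cos\theta_3)$ is the Fourier symbol of $\widehat{L}_h$ and $\hat q$ is the (trigonometric-polynomial) transform of $q$.

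For the lower bound I would observe that the symbol is bounded above, $\widehat{L}(\theta)\le 12$ in the Poisson case $\sigma=0$ (and $\le 12+\sigma$ in general), so that $1/\widehat{L}(\theta)\ge 1/12$ pointwise. Inserting this into the integral yields
\begin{align}
q^{T}S_-q\ge\frac{1}{12}\,\frac{1}{(2\pi)^3}\int_{[-\pi,\pi]^3}|\hat q(\theta)|^2\,d\theta=\frac{1}{12}\|q\|^2,
\end{align}
so every eigenvalue, hence every singular value, is at least $1/12$. This is the exact discrete analogue of the $1/8$ bound in 2D from \cite{martinsson2009boundary}, the constant being the reciprocal of the top of the spectrum of $-\Delta_h$.

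For the upper bound I would pass to matrix norms: since $S_-$ is symmetric, $\lambda_{\max}=\|S_-\|_2\le\|S_-\|_\infty=\max_{x_m\in\gamma_-}\sum_{x_n\in\gamma_-}G_\sigma(|m-n|)$. A Watson-type representation such as \eqref{eqn:lgf_bessel} (with the exponent $6$ replaced by $6+\sigma$ for $\sigma>0$) shows $G_\sigma(n)>0$ and, because $I_k(2t)\le I_0(2t)$ for every $k$, that $G_\sigma(n)\le G_\sigma(0)$ for all $n$. Each of the $N=|\gamma_-|$ terms in the row sum is then at most $G_\sigma(0)$, giving $\lambda\le N\,G_\sigma(0)=CN$ with $C=G_\sigma(0)$ a fixed constant independent of $N$ and of the shape of $\Omega_h$.

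The main obstacle is the borderline integrability hidden in the first identity: in three dimensions the single layer potential $v$ decays only like $|m|^{-1}$ and fails to lie in $\ell^2$ when $\sum q\neq 0$, so the quadratic form cannot be analysed through an $\ell^2$ spectral decomposition of $v$ itself. The Fourier representation circumvents this because $\widehat{L}(\theta)\sim|\theta|^2$ near the origin and $|\hat q(\theta)|^2/|\theta|^2$ is integrable over the three-dimensional cube (the two-dimensional case being precisely the logarithmically divergent one that required the more delicate treatment in \cite{martinsson2009boundary}). I would therefore isolate, as the two technical lemmas on which the argument rests, the convergence of this integral and the positivity and monotonicity of $G_\sigma$ extracted from the Bessel integral; the remaining ingredients are the elementary symbol bound $\widehat{L}\le 12$ and the row-sum estimate.
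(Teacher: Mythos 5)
Your proposal is correct, and while your upper bound is essentially the paper's argument, your lower bound takes a genuinely different route. For the upper bound, both you and the paper reduce $\|S_-\|_2$ to row/column-sum norms and invoke boundedness of the kernel; your justification $0 < G_\sigma(n) \le G_\sigma(0)$ via the Bessel representation and $I_k(2t)\le I_0(2t)$ is in fact more explicit than the paper's bare assertion $|S(m,n)|\le C$. For the lower bound, the paper follows Martinsson's 2D energy argument in physical space: summation by parts turns the quadratic form into the discrete Dirichlet energy $\sum_{m\in\mathbb{Z}^3}\sum_i |\partial_i u|^2$, which is then bounded below via the flux-equilibrium identity $L_h u(m)=\sigma(m)$ at boundary nodes and Cauchy--Schwarz over the six difference terms (this is where $\tfrac12\cdot\tfrac16=\tfrac1{12}$ arises; the paper's displayed inequality actually omits the factor $6$ from Cauchy--Schwarz, though the final constant comes out right). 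You instead work on the Fourier side: the quadratic form equals $\frac{1}{(2\pi)^3}\int |\hat q(\theta)|^2/\widehat L(\theta)\,d\theta$, and the pointwise symbol bound $\widehat L(\theta)\le 12$ plus Parseval give $q^T S_- q \ge \tfrac1{12}\|q\|^2$ in one step. Your route is shorter, identifies $1/12$ structurally as the reciprocal of the top of the spectrum of $-\Delta_h$, and correctly isolates the 3D-specific integrability of $|\hat q(\theta)|^2/|\theta|^2$ that legitimizes the representation (precisely the obstruction that makes the 2D case delicate); the paper's real-space energy route, by contrast, does not rely on a convolution/symbol structure and so is portable to perturbed or variable-coefficient stencils where no clean Parseval identity exists. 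One caveat, which you flag honestly: for $\sigma>0$ your constant degrades to $1/(12+\sigma)$, so the flat $1/12$ in the theorem is really the Poisson case; the paper's proof likewise silently drops the $\sigma$-terms, so this is a looseness in the statement rather than a defect of your argument.
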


\begin{proof}
For the upper bound:

\begin{align}
\lambda  \leq ||S_-||_2 \leq \sqrt{||S_-||_1||S_-||_{\infty}}
\end{align}
Since $|S(m,n)| \leq C$, we have $||S_-||_1 \leq CN$ and $||S_-||_\infty \leq CN$
Therefore:
\begin{align}
\lambda\leq ||S_-||_2 \leq \sqrt{CN\cdot CN} = CN
\end{align}

For the lower bound, we use an energy argument similar to the 2D case \cite{martinsson2009boundary} but extended to three dimensions.
Define the forward and backward difference operators:
\begin{subequations}
\begin{align}
\partial_i u = u(m + e_i) - u(m)\\
\bar{\partial}_i u = u(m) - u(m - e_i)
\end{align}
\end{subequations}
where $e_i$ are unit vectors in the three coordinate directions.
For lattice potentials $u, v$ that decay sufficiently fast at infinity:
\begin{align}
\sum [\bar{\partial}_iu](m)v(m) = -\sum u(m)[\partial_iv](m)
\end{align}
For any boundary density $\sigma$, we need to prove:
\begin{align}
\left|\sum_{m\in\gamma_-} [S_-\sigma](m) \sigma(m)\right|\geq \frac{1}{12}\sum_{m\in\gamma_-} |\sigma(m)|^2
\end{align}

Let $u$ be defined by
\begin{align}
u(m) = \sum_{n\in\gamma_-} S(m,n)\sigma(n)
\end{align}
Then
\begin{subequations}
\begin{align}
\sum_{m\in\gamma_-} [S_-\sigma](m) \sigma(m) = \sum_{m\in\gamma_-} u(m)\sigma(m) = \sum_{m\in\gamma_-} u(m)Au = \sum_{m\in\mathbb{Z}^3} u(m)Au\\
=\sum_{m\in\mathbb{Z}^3} |\partial_1u|^2+|\partial_2u|^2+|\partial_3u|^2\\
=\frac{1}{2}\sum_{m\in\mathbb{Z}^3} |\partial_1u|^2+|\partial_2u|^2+|\partial_3u|^2+|\bar{\partial}_1u|^2+|\bar{\partial}_2u|^2+|\bar{\partial}_3u|^2\\
\geq\frac{1}{2}\sum_{m\in\gamma_-} |\partial_1u|^2+|\partial_2u|^2+|\partial_3u|^2+|\bar{\partial}_1u|^2+|\bar{\partial}_2u|^2+|\bar{\partial}_3u|^2
\end{align}
\end{subequations}
The flux equilibrium equation at boundary points becomes:
\begin{align}
-\partial_1 u(m)+\bar{\partial_1}u(m)-\partial_2 u(m)+\bar{\partial_2}u(m)-\partial_3 u(m)+\bar{\partial_3}u(m)=\sigma(m)
\end{align}
which implies
\begin{align}
|\partial_1u|^2+|\partial_2u|^2+|\partial_3u|^2+|\bar{\partial}_1u|^2+|\bar{\partial}_2u|^2+|\bar{\partial}_3u|^2\geq |\sigma(m)|^2
\end{align}
Combining these inequalities gives
\begin{align}
\left|\sum_{m\in\gamma_-} [S_-\sigma](m) \sigma(m)\right|\geq \frac{1}{12}\sum_{m\in\gamma_-} |\sigma(m)|^2
\end{align}

\end{proof}

For double layer potential, similar results can be established as well.
\begin{theorem}
Let $\gamma_-$ be the boundary of a connected lattice domain $\Omega_h$ in $\mathbb{Z}^3$ with $N$ boundary nodes, and $c$ be a real number such that $0<c\leq 1$. Suppose there exists an ordering $\{m_i,i=1,2,\cdots,N\}$ of $\gamma_-$ such that for all $i\neq j$,
\begin{align}
|m_i-m_j|\geq c\cdot d(i,j)
\end{align}
where $d(i,j)=\min(|i-j|,(N-|i-j|))$, the the matrix $D_-$ of the double layer potential associated with $\gamma_-$ satisfies
\begin{align}
||D_-||\leq \frac{C}{c^2}
\end{align}
where $C$ is a dimension-independent constant.
\end{theorem}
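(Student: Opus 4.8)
The plan is to reduce the bound on $\|D_-\|$ to a Schur-type estimate on the absolute row and column sums of $D_-$, exactly as in the upper-bound argument for $S_-$ above, through the interpolation inequality $\|D_-\| \le \sqrt{\|D_-\|_1\,\|D_-\|_\infty}$. The crucial analytic input will be a pointwise decay estimate for the double layer kernel of the form $|D(m,n)| \le C/|m-n|^2$. This quadratic decay, as opposed to the $|m-n|^{-1}$ decay of the single layer kernel $S(m,n)=G_\sigma(|m-n|)$, is precisely what makes the three-dimensional argument go through directly: once combined with the ordering hypothesis it produces sums of the type $\sum_d d^{-2}$, which converge, whereas the analogous planar estimate yields only a logarithmically divergent $\sum_d d^{-1}$ and forces the more delicate summation-by-parts argument of \cite{martinsson2009boundary}.

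First I would establish the decay estimate. By definition $D(m,n)=\sum_{k\in\mathbb{D}_n}[G_\sigma(|m-n|)-G_\sigma(|m-k|)]$, and every $k\in\mathbb{D}_n$ satisfies $|n-k|=1$, so each summand is a discrete first difference of the lattice Green's function along a coordinate direction. For $\sigma=0$ the asymptotic expansion recorded earlier gives $G_0(\bm{n})=-\tfrac{1}{4\pi|\bm{n}|}+\mathcal{O}(|\bm{n}|^{-3})$, whose first difference along a unit vector behaves like the directional derivative of $-1/(4\pi|\bm{n}|)$ and is therefore $\mathcal{O}(|\bm{n}|^{-2})$; for $\sigma>0$ the kernel $G_\sigma$ decays exponentially and the difference is even smaller. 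Combining this large-argument estimate with the uniform boundedness of $G_\sigma$ at small arguments (where $1/|m-n|^2$ is itself bounded away from $0$) yields a single constant $C$ such that $|D(m,n)|\le C/|m-n|^2$ for all $m\ne n$, while the diagonal entry $D(n,n)=\sum_{k\in\mathbb{D}_n}[G_\sigma(0)-G_\sigma(1)]$ is a bounded constant.

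Next I would feed this into the ordering hypothesis. For $i\ne j$ the assumption $|m_i-m_j|\ge c\,d(i,j)$ gives $|D(m_i,m_j)|\le C/(c^2 d(i,j)^2)$. Fixing a row index $i$, the cyclic distance $d(i,j)=\min(|i-j|,N-|i-j|)$ equals any given value in $\{1,2,\dots,\lfloor N/2\rfloor\}$ for at most two indices $j$, so
\begin{align}
\sum_{j\ne i}|D(m_i,m_j)|\;\le\;\frac{C}{c^2}\sum_{j\ne i}\frac{1}{d(i,j)^2}\;\le\;\frac{2C}{c^2}\sum_{d=1}^{\infty}\frac{1}{d^2}\;=\;\frac{\pi^2 C}{3c^2}.
\end{align}
Adding the bounded diagonal term shows $\|D_-\|_\infty\le C'/c^2$; because the hypothesis $|m_i-m_j|\ge c\,d(i,j)$ is symmetric in $i$ and $j$, the identical computation bounds the column sums, so $\|D_-\|_1\le C'/c^2$ as well. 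The interpolation bound then gives $\|D_-\|\le\sqrt{\|D_-\|_1\,\|D_-\|_\infty}\le C'/c^2$, which is the asserted inequality.

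The main obstacle I expect is making the decay estimate $|D(m,n)|\le C/|m-n|^2$ genuinely uniform rather than merely asymptotic. Two points require care: controlling the remainder in the lattice Green's function expansion so that its first difference is provably $\mathcal{O}(|\bm{n}|^{-2})$ with an explicit constant, where the precise asymptotics of \cite{martinsson2002asymptotic} enter, and handling the crossover between the near-field regime, where I rely only on boundedness of $G_\sigma$, and the far-field regime, so that a single constant serves for all separations. The remaining bookkeeping is to verify that this constant traces back only to the universal coefficient $1/(4\pi)$ and to the value of $\sum_d d^{-2}$, and is hence independent of the dimension and of the particular domain $\Omega_h$.
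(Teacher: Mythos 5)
Your proposal is correct and follows essentially the same route as the paper: both proofs hinge on the kernel decay estimate $|D(m,n)|\le C/(1+|m-n|^2)$ combined with the ordering hypothesis $|m_i-m_j|\ge c\,d(i,j)$ to reduce the norm bound to the convergent sum $\sum_d d^{-2}$, the only (interchangeable) difference being that the paper finishes by dominating $D_-$ entrywise with the circulant matrix $D'_{ij}=1/(1+d^2(i,j))$ and bounding its eigenvalues via Fourier analysis, while you use the row/column-sum interpolation $\|D_-\|\le\sqrt{\|D_-\|_1\,\|D_-\|_\infty}$. A minor bonus of your write-up is that you sketch a derivation of the kernel decay itself, via first differences of the lattice Green's function and the asymptotics of \cite{martinsson2002asymptotic}, whereas the paper merely asserts it with ``one can verify.''
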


\begin{proof}
Likewise, one can verify that the double layer kernel $D$ satisfies
\begin{align}
|D(m,n)|\leq \frac{C}{1+|m-n|^2}
\end{align}
for all $m,n\in\gamma_-$.

Combining with the geometric assumption
\begin{align}
|D_{-,ij}| = |D(m_i,m_j)|\leq \frac{C}{1+|m_i-m_j|^2}\leq \frac{C}{1+c^2d^2(i,j)}\leq \frac{1}{c^2}\frac{C}{1+d^2(i,j)}
\end{align}
Now define $D'$ with entry at $(i,j)$
\begin{align}
D'_{ij}=\frac{1}{1+d^2(i,j)}
\end{align}
Hence $D'$ is a symmetric circulant matrix, whose eigenvalues can be shown to be uniformly bounded by a constant $C$ independent of $N$ using Fourier transform.
\begin{align}
||D'||\leq C
\end{align}
thus
\begin{align}
||D_-||\leq \frac{1}{c^2}||D'||\leq \frac{C}{c^2} 
\end{align}
\end{proof}

\begin{remark}
It can be checked that the double layer potential matrix $D$ admits an eigenvalue $-1$.
\end{remark}

\subsection{Boundary equations}

Now we have obtained three types of boundary equations
\begin{subequations}
\begin{align}
&\left\{
\begin{array}{r}
(I - P_+)v_{\gamma_+} - P_{-}v_{\gamma_-}=0\\
\Phi_+v_{\gamma_+} + \Phi_{-}v_{\gamma_-}=b
\end{array}
\right.\label{eqn:direct_sys}\\
&\left\{
\begin{array}{r}
v_{\gamma_+} =S_{+}q_{s}\\
v_{\gamma_-} =S_{-}q_{s}\\
\Phi_+v_{\gamma_+} + \Phi_{-}v_{\gamma_-}=b
\end{array}
\right.\label{eqn:single_sys}\\
&\left\{
\begin{array}{r}
v_{\gamma_+} =D_{+}q_{d}\\
v_{\gamma_-} =D_{-}q_{d}\\
\Phi_+v_{\gamma_+} + \Phi_{-}v_{\gamma_-}=b
\end{array}\label{eqn:double_sys}
\right.
\end{align}
\end{subequations}
where $b(x^*,y^*,z^*)=g(x^*,y^*,z^*)-\sum_{x_{jkl}\in\gamma}[G_hf_{\gamma}]_{jkl} \Phi_{jkl}(x^*,y^*,z^*)$ for $(x^*,y^*,z^*)\in\Gamma$, and $\Phi_\pm$ are the sub-matrices in the coefficient matrices $\Phi$ that corresponds to point set $\gamma_\pm$ respectively.

\paragraph{Schur complement}

The three systems \eqref{eqn:direct_sys}--\eqref{eqn:double_sys} hint a block linear system
\begin{align}
\left(
\begin{array}{cc}
A & B \\
\Phi_+ & \Phi_- \\
\end{array}
\right)
\left(
\begin{array}{c}
v_{\gamma_+}\\
v_{\gamma_-}
\end{array}
\right)
=
\left(
\begin{array}{c}
0 \\
b \\
\end{array}
\right)
\end{align}
where $\Phi_+, \Phi_-$ are sparse and almost diagonal matrices.

Schur complement for the above system gives
\begin{subequations}
\begin{align}
v_{\gamma_+} &= -A^{-1}Bv_{\gamma_-}\\
v_{\gamma_-} &= (\Phi_--\Phi_+A^{-1}B)^{-1}b
\end{align}
\end{subequations}

The matrix multiplication $A^{-1}B$ can be any term in the equivalence \eqref{eqn:equal}:
\begin{align}
-A^{-1}B:=(I-P_+)^{-1}P_- = S_+S_{-}^{-1}=D_+D_{-}^{-1}
\end{align}

We have seen that the norm of $S_-$ will grow like $\mathcal{O}(N)$ and the norm of $D_-$ remain uniformly bounded. Previous numerical study in \cite{xia2023local} revealed that the condition number of $I-P_+$ will also grow linearly. Hence, the double layer formulation using $D_\pm$ is preferred.

When double layer formulation is used, the block system becomes:
\begin{align}
\left(
\begin{array}{cc}
D_+ & -D_- \\
\Phi_+ & \Phi_- \\
\end{array}
\right)
\left(
\begin{array}{c}
v_{\gamma_+}\\
v_{\gamma_-}
\end{array}
\right)
=
\left(
\begin{array}{c}
0 \\
b \\
\end{array}
\right)
\end{align}
and Schur complement gives
\begin{subequations}
\begin{align}
v_{\gamma_+} &= D_+D_-v_{\gamma_-}\\
v_{\gamma_-} &= (\Phi_-+\Phi_+D_+D_-^{-1})^{-1}b
\end{align}
\end{subequations}

The matrix $\Phi_--\Phi_+A^{-1}B$ or $\Phi_-+\Phi_+D_+D_-^{-1}$ resembles the Fredholm integral of second kind. This formulation has favorable conditioning properties. Notably, its condition number remains relatively stable under mesh refinement. Similar observations were made in \cite{feng2020fft,ren2022fft,ying2007kernel,gillis2018fast,gillis20192d,li1998fast,tan2009fast} where either Schur complement or Sherman-Morrison-Woodbury formula is used. Hence, $D_{-}$ and $\Phi_-+\Phi_+D_+D_-^{-1}$ are both well-conditioned, with condition numbers independent of mesh refinement, which are both suitable for iterative solvers such as GMRES.

\paragraph{Linear solve}
However, $I-P_{\gamma_+}$ is relatively small in size yet dense, so are $S_{-}$ and $D_{-}$, and it would be unwise to invert those matrices and construct $\Phi_-+\Phi_+D_+D_-^{-1}$. Instead, in the indirect formulation, we observe that
\begin{align}
v_{\gamma_+}=D_+q_d,\quad v_{\gamma_-}=D_-q_d\quad\mbox{or}\quad v_{\gamma_+}=S_+q_s,\quad v_{\gamma_-}=S_-q_s
\end{align} 
where $q_d$ and $q_s$ are the unknown double layer density and single layer density to be determined.

Then the single layer formulation together with the boundary condition gives
\begin{align}
(\Phi_+S_++\Phi_-S_-)q_s = b
\end{align}
and double layer gives
\begin{align}\label{eqn:double_bae}
(\Phi_+D_++\Phi_-D_-)q_d = b
\end{align}
where we can solve for the density $q_s$ or $q_d$.

Both systems can be solved using iterative methods. This approach avoids explicit storage or inversion of matrices $D_-$ and $S_-$. After obtaining the density $q_s$ or $q_d$, we can compute the density $v_\gamma$ through either the single or double layer formulation. Once $v_\gamma$ is obtained, the rest follows what we discussed in the difference potentials method in Section~\ref{sec:dpm}. Even though both single layer formulation and double layer formulation can be adopted, we will use the double layer formulation as it gives better conditioned linear system, where iterative solvers can be efficiently utilized.

\begin{remark}
The Schur complement admits better spectral property and this can be explained by the right preconditioning of BAE \eqref{eqn:double_bae}:
\begin{align}
(\Phi_+D_++\Phi_-D_-)(D_-^{-1}D_-)q_d = b
\end{align}
which gives
\begin{align}
(\Phi_+D_+D_-^{-1}+\Phi_-)v_{\gamma_-} = b
\end{align}
However, $D_-$ is dense and using $D_-$ as a preconditioner might outweigh the advantages of using iterative solvers.
\end{remark}

\subsection{Outline of the developed unfitted BAE}

Now we summarize the developed unfitted BAE.

\begin{algorithm}[t]
\caption{Unfitted Boundary Algebraic Method}\label{alg:bae}
\allowdisplaybreaks
\begin{algorithmic}[1]

    \State \textbf{Input:} Domain $\Omega$, Auxiliary domain $\Omega^0$, RHS $f$, BC data $g$, grid size $h$, coefficient $\sigma$
    \State \textbf{Output:} Solution $u$ on $M^+$

    \Statex

    \State // Precompute Lattice Green's Function on infinite lattices
    \State $\text{LGF} \gets \text{PrecomputeLGF}(\sigma)$ 

    \Statex

    \State // Initialize finite grid and point sets
    \State $\text{finite grid } \omega_h \gets \text{CreateCartesianGrid}(\Omega^0, h)$
    \State $M^\pm, N^\pm, \gamma^\pm \gets \text{ClassifyPoints}(\text{grid}, \Omega^0, \Omega)$
    
    \Statex

    \State // Compute particular solution on finite grid $\omega_h$
    \State $u_p \gets G_h[\chi_{M^+}f_h]$ in $N^+$ via FFT

    \Statex
    
    \State // Form boundary system (double layer formulation)
    \State $\Phi_\pm \gets \text{ConstructBasisMatrices}(\gamma^+, \gamma^-)$ \Comment{Sparse matrices}
    \State $D_\pm \gets \text{ConstructDoubleLayerOperators}(\text{LGF}, \gamma^+, \gamma^-)$ \Comment{Matrix-free}
    
    \Statex

    \State // Solve boundary system
    \State $b \gets g - \text{EvaluateBasisFunctions}(u_p)$ \Comment{Subtract contribution of $u_p$ at boundary points}
    \State $A \gets \Phi_+D_+ + \Phi_-D_-$ \Comment{Matrix-free}
    \State $q_d \gets \text{GMRES}(A, b)$  \Comment{Double layer density}
    \State $u_{\gamma} \gets [D_+q_d; D_-q_d]$ \Comment{Matrix-free}

    \Statex
    
    \State // Reconstruct solution
    \State $u \gets u_p + G_h[\chi_{M^-}L_hu_\gamma]$ in $M^+$

\end{algorithmic}
\end{algorithm}

The memory requirement of Algorithm~\ref{alg:bae} will be $\mathcal{O}(N^3)$. The matrix-free multiplication of $D_+q_d$ and $D_-q_d$ would benefit from fast summation techniques, which is not explored in this work. The computational cost in Algorithm~\ref{alg:bae} mainly lies in the iterative solvers for finding the double layer density and the 3D FFT of finding particular solution and the construction of difference potentials once $u_\gamma$ is obtained. In this work, we use FFT to solve the for particular solution, utilizing the bounded domain. For unbounded domains and nonhomogenous source functions, fast summation techniques such as those developed in \cite{gillman2014fast} is in order.

\begin{remark}
The double layer formulation $D_{\pm}$ in Algorithm~\ref{alg:bae} can be replaced by the single layer formulation $S_\pm$ or the direct formulation in the difference potentials framework, depending on the computational need.
\end{remark}
\section{Numerical results}\label{sec:numerical_results}
We will use Algorithm~\ref{alg:bae} to perform the numerical tests.

The finite auxiliary domain is slightly larger than the torus and is chosen to be $[-1-\ell,1+\ell]^3$ and the grid size is taken as
\begin{align}
h = \frac{2+2\ell}{N},\quad N = 2^n-1
\end{align}
which gives a total of $N^3$ degrees of freedom in the auxiliary domain and the number of unknowns in the boundary equations is in the scale of $\mathcal{O}(N^2)$.

The Lattice Green's functions are precomputed on mesh $\mathbb{Z}^3$ and stored in a lookup matrix of size $N\times N\times N$ for only positive index values, and can be reused in different tests. Due to the symmetry of the Lattice Green's function, only 1/48 of the values are stored in 3D.

\subsection{Poisson's equation}
The geometry is an ellipsoid given implicitly by
\begin{align}
\phi(x,y,z) = \frac{x^2}{a^2}+\frac{y^2}{b^2}+\frac{z^2}{c^2}-1=0
\end{align}
with $a=1,b=0.8,c=0.4$. We choose $\ell=0.25$ for the auxiliary domain in this test. The spherical harmonics spectral approach developed in \cite{epshteyn2019efficient} will not work well for this ellipsoidal geometry.

The errors are computed against the exact solution
\begin{align}
u(x,y,z) = x^2+y^2+z^2
\end{align}
and boundary data $g(x,y,z)$ and the source term $f(x,y,z)$ are computed using the exact $u(x,y,z)$.

Due to memory limit, we do not store matrices $D_{\pm}$ or $S_\pm$, and we refer to \cite{martinsson2009boundary} for behaviors of singular values of matrices $S_-$ and $D_-$. In Figure~\ref{fig:poisson_res}, we present the relative residual of solving 
\begin{align}
(\Phi_+D_+ + \Phi_-D_-) q_d = b
\end{align}
using vanilla GMRES with no preconditioner and a zero initial guess. The tolerance is set to be $10^{-14}$ for test purpose, whereas a larger tolerance such as $10^{-8}$ would suffice for accuracy. It can be observed that for $N=31,63,127,255$, where the length of $q_d$ approximately quadruples over mesh refinement, the number of iterations grow mildly, even though not as good as the double layer formulation in the boundary integral method. This indicates that preconditioning techniques should be explored.

\begin{figure}[H]
    \centering
    \includegraphics[width=0.5\textwidth, trim = 0cm 5cm 0cm 5.5cm]{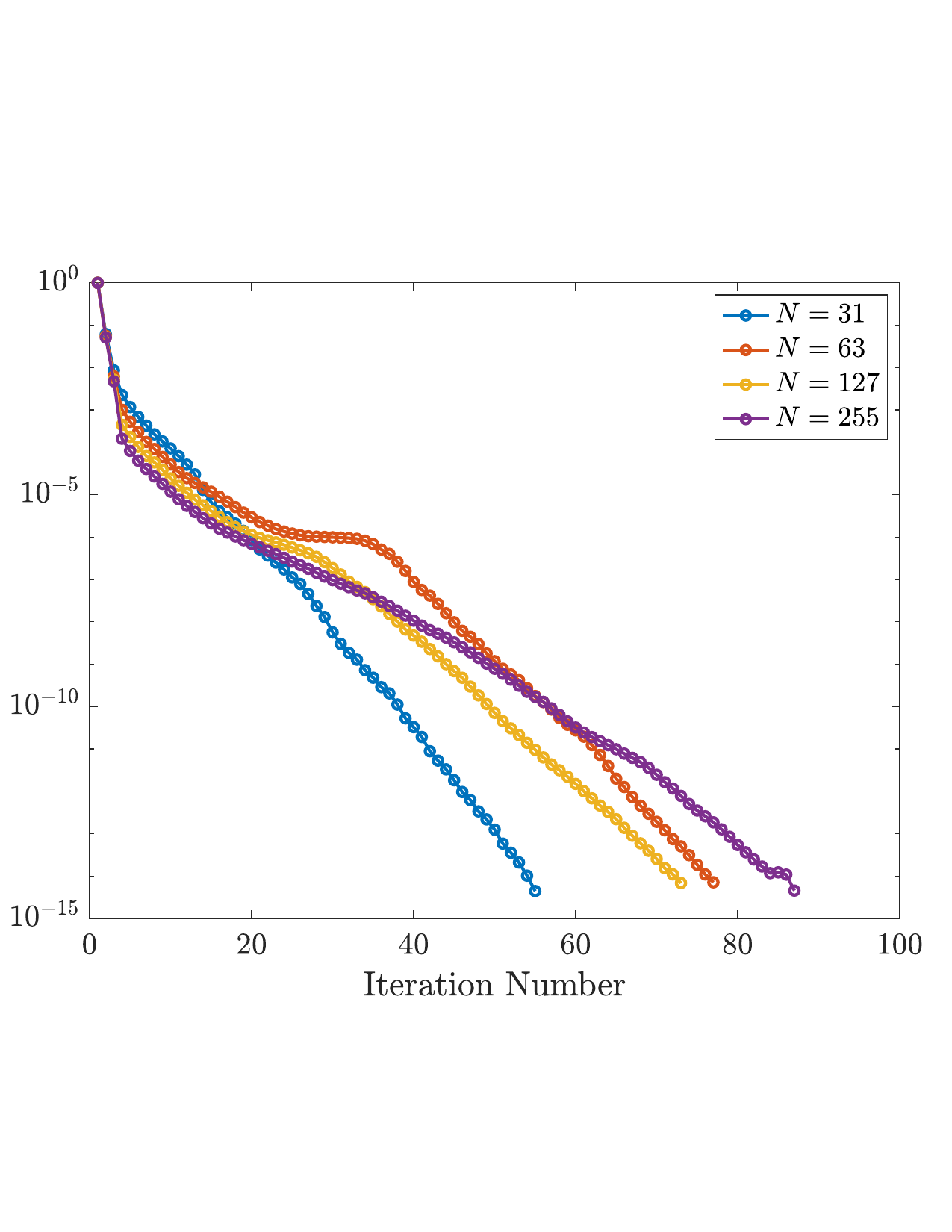}
    \caption{Relative residual of GMRES using double layer formulation for Poisson equations}
    \label{fig:poisson_res}
\end{figure}

In Figure~\ref{fig:poisson_results}, we plot the numerical solution and the error patterns on the finest mesh $255\times255\times255$ in our simulation. The error patterns hint that it might benefit from post-processing techniques studied such as in \cite{mirzaee2011smoothness}.

\begin{figure}[H]
    \centering
    \begin{subfigure}{0.45\textwidth}
        \centering
        \includegraphics[width=\textwidth]{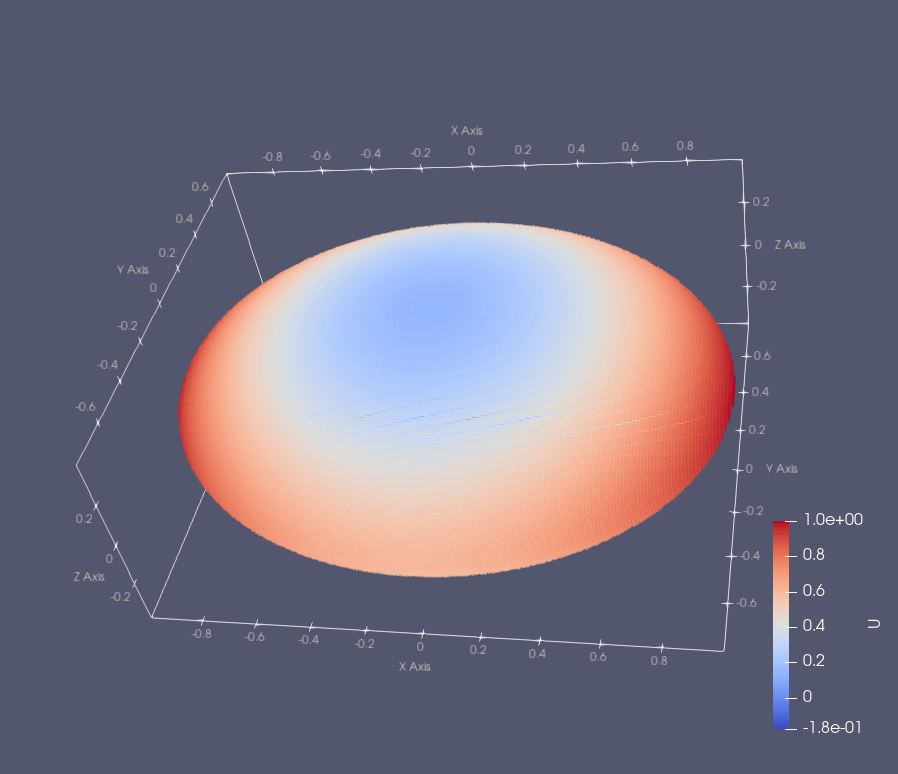}
        \caption{Numerical Solution $(N=255)$}
        \label{fig:poisson_solution}
    \end{subfigure}
    \quad
    \begin{subfigure}{0.45\textwidth}
        \centering
        \includegraphics[width=\textwidth]{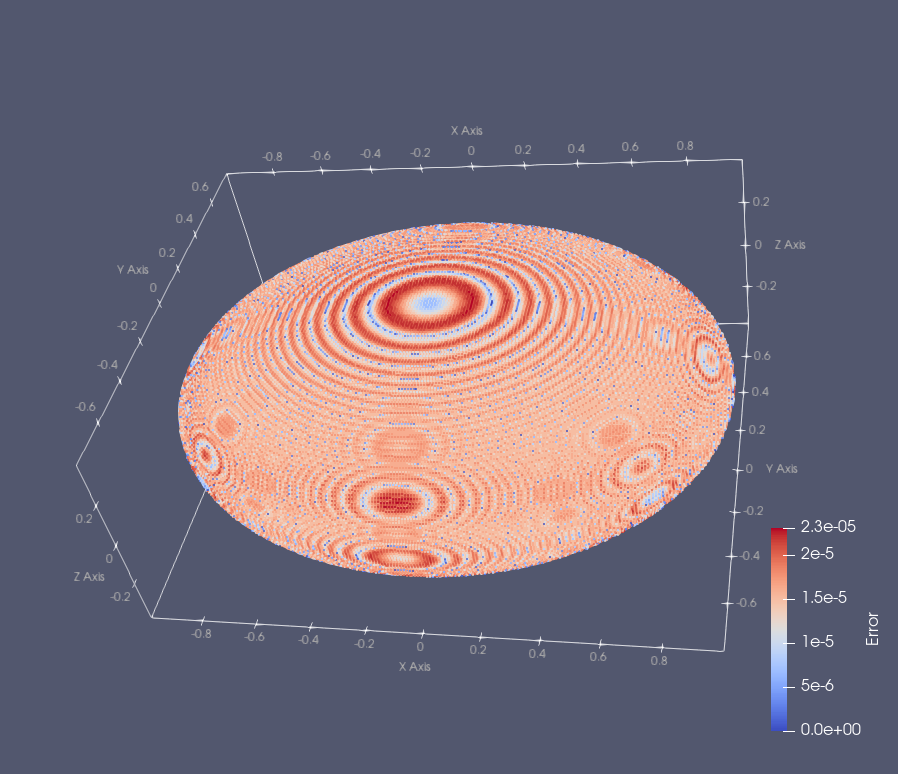}
        \caption{Pointwise errors $(N=255)$}
        \label{fig:poisson_error}
    \end{subfigure}
    \caption{Numerical solution and pointwise errors for Poisson equation}\label{fig:poisson_results}
\end{figure}

In Table~\ref{table:poisson_convergence}, the max-norm error and convergence rates are presented, which shows the designed second order convergence.

\begin{table}[htbp]
    \centering
    \begin{tabular}{@{} r S[table-format=1.4e-1] S[table-format=1.2] @{}}
        \toprule
        {$N$} & {Max Error} & {Rate} \\
        \midrule
        31   & 1.3222e-03 & {--} \\
        63   & 3.5101e-04 & 1.91 \\
        127  & 9.1688e-05 & 1.94 \\
        255  & 2.2926e-05 & 2.00 \\
        \bottomrule
    \end{tabular}
    \caption{Max error and convergence rates for Poisson's equation in an ellipsoid}
    \label{table:poisson_convergence}
\end{table}

\subsection{Modified Helmholtz equation}

For modified Helmholtz equation, we test with $\sigma=10$. For the geometry, we use a multi-connected torus defined by the implicit function
\begin{align}
\phi(x,y,z) = (\sqrt{x^2+y^2}-R)^2+z^2-r^2 = 0
\end{align}
where $R=0.6$ is the distance between the center of the tube and the center of the torus and $r=0.3$ is the radius of the tube. The interior of the torus is categorized by $\phi(x,y,z)<0$. The auxiliary domain takes $\ell=0.1$.

\begin{remark}
A torus is simple enough, yet presenting sufficient numerical challenges for unfitted boundary methods. We chose this geometry to demonstrate the effectiveness of combing lattice Green's function in free space with local basis functions, where global basis functions will be difficult to construct for spectral approaches. We should also mention that using Non-Uniform Rational B-Splines (NURBS) on patches for 3D CAD geometry is also possible and has been studied in the difference potentials framework in \cite{PETROPAVLOVSKY2024112705}.
\end{remark}

The exact solution is
\begin{align}
u(x,y,z) = \sin(x)\cos(y)\sin(z),
\end{align}
and $f$ and $g$ are computed using the exact solution.

It can be seen in Figure~\ref{fig:poisson_res} and Figure~\ref{fig:mod_res} that the iteration numbers are similar regardless of the equation types $\sigma=0$ or $\sigma=10$ or different types of geometry or lattice Green's functions, corroborating the robustness of the boundary algebraic linear system.

The GMRES convergence behavior shown in Figure~\ref{fig:mod_res} demonstrates the effectiveness of the double layer formulation for the modified Helmholtz equation. The similar convergence patterns observed for different mesh sizes $(N = 31, 63, 127, 255)$ suggest that the conditioning of the system remains relatively stable under mesh refinement. This behavior is particularly noteworthy given the complex geometry of the torus and the challenging nature of the modified Helmholtz operator.

\begin{figure}[H]
    \centering
    \includegraphics[width=0.5\textwidth, trim = 0cm 5cm 0cm 5.5cm]{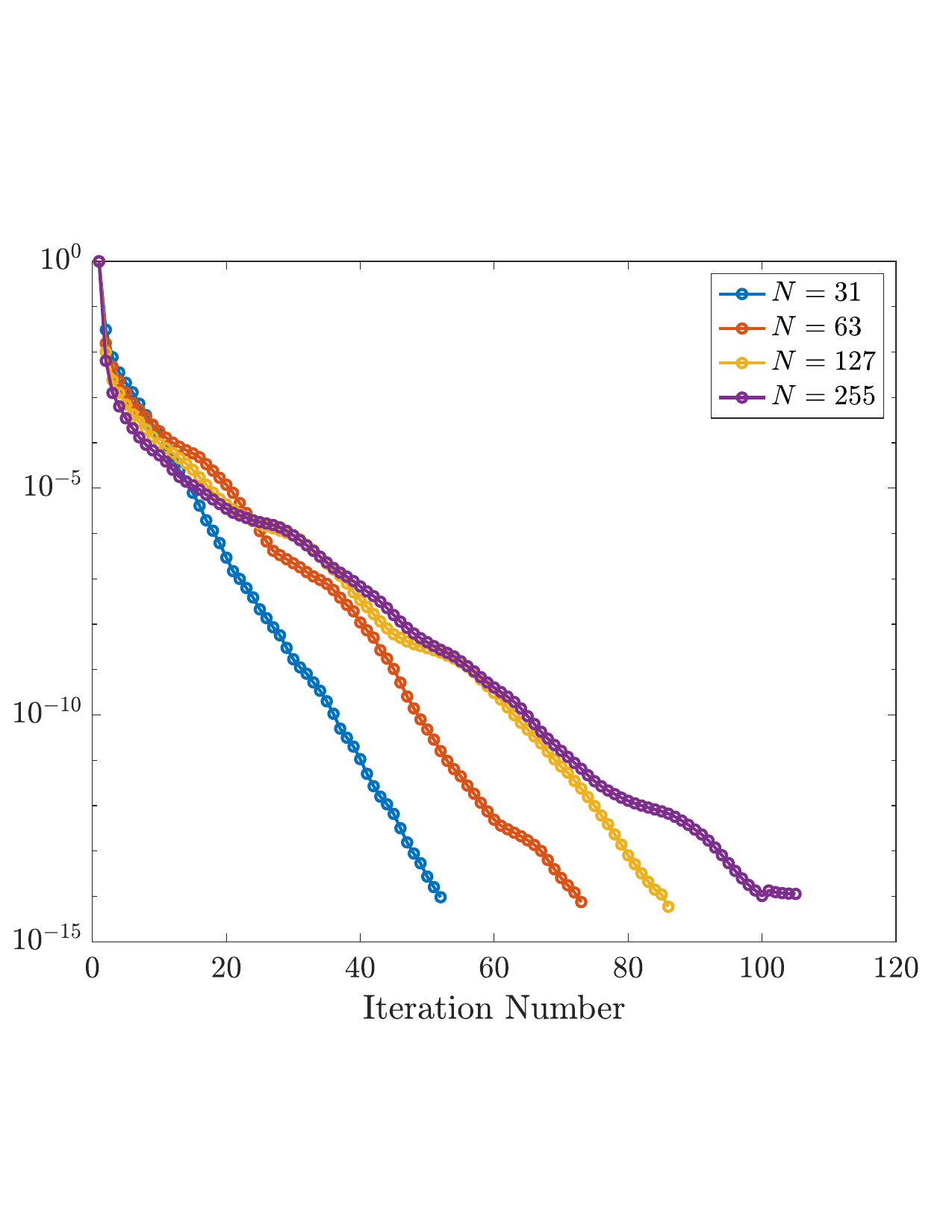}
    \caption{Relative residual of GMRES using double layer formulation for modified Helmholtz equations}
    \label{fig:mod_res}
\end{figure}

In Figure~\ref{fig:mod_results}, the numerical solution and the error patterns on the finest mesh $255\times255\times255$ are presented. The large errors occur around where $u$ is large in magnitude.

\begin{figure}[H]
    \centering
    \begin{subfigure}{0.45\textwidth}
        \centering
        \includegraphics[width=\textwidth]{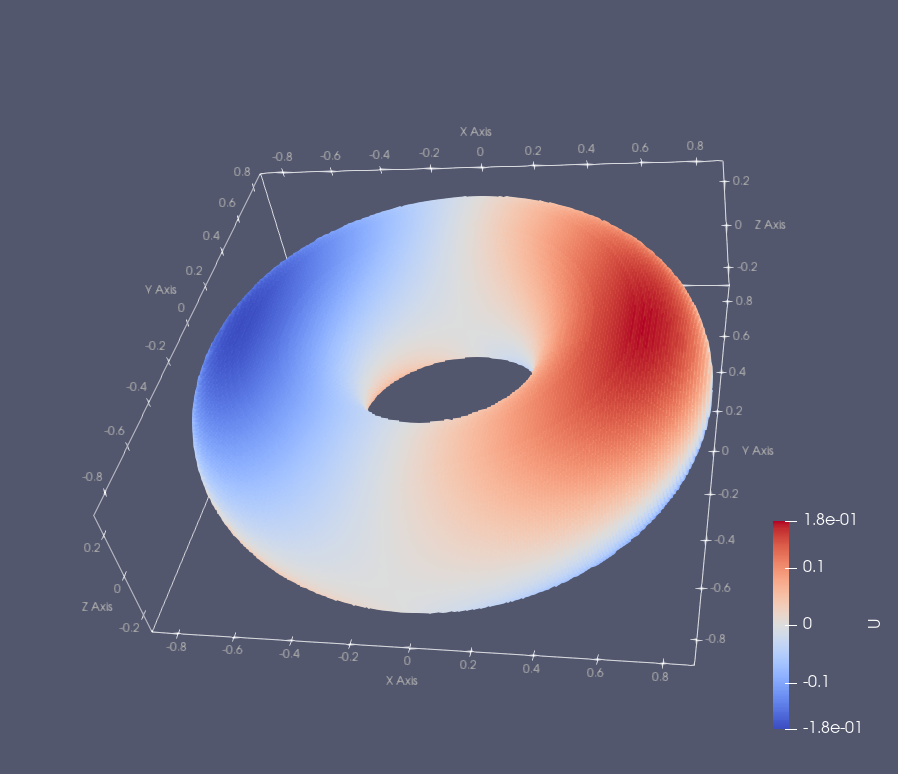}
        \caption{Numerical Solution $(N=255)$}
        \label{fig:mod_solution}
    \end{subfigure}
    \quad
    \begin{subfigure}{0.45\textwidth}
        \centering
        \includegraphics[width=\textwidth]{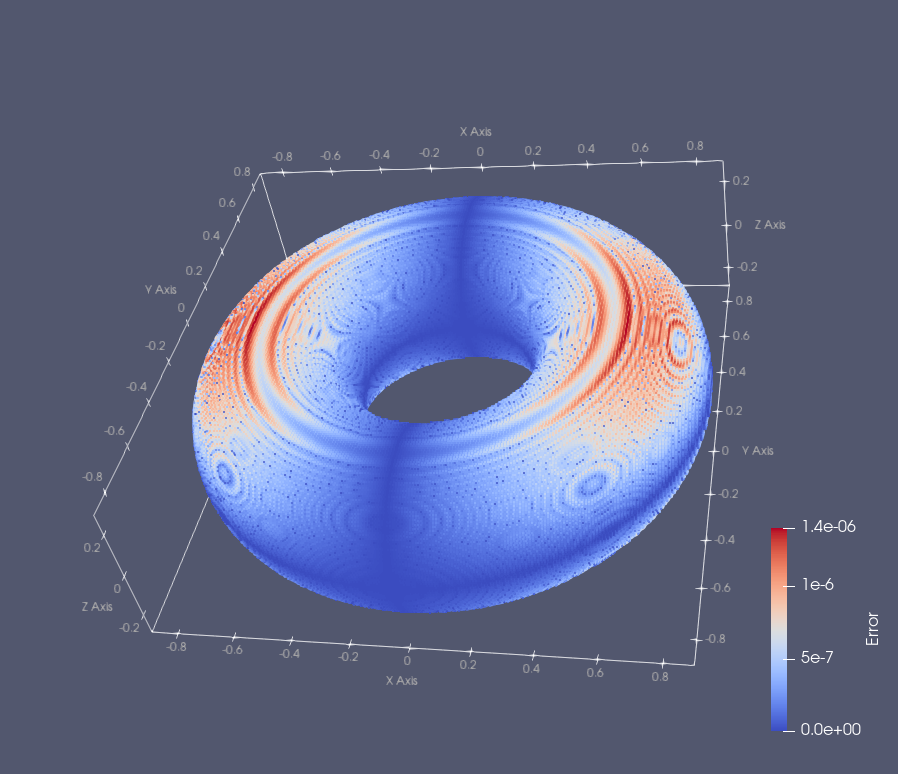}
        \caption{Pointwise errors $(N=255)$}
        \label{fig:mod_error}
    \end{subfigure}
    \caption{Numerical solution and point errors for modified Helmholtz equation}\label{fig:mod_results}
\end{figure}

In Table~\ref{table:mod_convergence}, the max-norm error and convergence rates also show that the designed second order convergence is achieved.

\begin{table}[htbp]
    \centering
    \begin{tabular}{@{} r S[table-format=1.4e-1] S[table-format=1.2] @{}}
        \toprule
        {$N$} & {Max Error} & {Rate} \\
        \midrule
        31   & 7.5002e-05 & {--} \\
        63   & 2.0682e-05 & 1.86 \\
        127  & 5.6077e-06 & 1.88 \\
        255  & 1.3978e-06 & 2.00 \\
        \bottomrule
    \end{tabular}
    \caption{Max Error and Convergence Rates for modified Helmholtz equation in a torus}
    \label{table:mod_convergence}
\end{table}
\section{Conclusion}\label{sec:conclusion}

We have developed an unfitted boundary algebraic equation method based on difference potentials for elliptic PDEs using lattice Green's functions that significantly advances the state of the art in handling complex 3D geometries. By replacing finite auxiliary domains with free-space LGFs, our approach dramatically reduces the computational complexity of constructing difference potentials operators, making three-dimensional simulations more tractable. The analytical derivation of LGF-based potentials, combined with rigorous proofs of equivalence between direct and indirect BAE formulations, provides a solid theoretical foundation for the method. Our spectral analysis demonstrates that the resulting systems are well-conditioned for iterative solvers, particularly when using double layer formulations. Numerical experiments confirm the method's efficiency and accuracy for both Poisson and modified Helmholtz equations in 3D implicitly defined geometry.

The developed approach successfully bridges the gap between structured grid efficiency and geometric flexibility, offering a robust foundation for multidisciplinary applications in computational physics and engineering. By combining the computational advantages of lattice Green's functions with the geometric flexibility of difference potentials, this method provides a promising path forward for solving complex PDEs in irregular domains with optimal efficiency. It should also be noted that the current approach only applies to constant-coefficient PDEs, while the difference potentials framework is suitable for variable-coefficient or nonlinear PDEs.

Future work will extend this framework to handle unbounded domains through fast multipole acceleration or FFT based libraries \cite{caprace2021flups} and explore applications to more challenging problems such as high-frequency Helmholtz equations using lattice Green's function for Helmholtz equation \cite{beylkin2009fast,beylkin2008fast,linton2010lattice} and Stokes flows. Additional developments will focus on implementing high-order stencils as studied in \cite{gabbard2024lattice}, developing fast summation techniques for matrix-free implementations similarly as in \cite{gillman2014fast}, and investigating various types of boundary conditions. 

\section*{Acknowledgement}
This work is partially funded by Natural Science Foundation of China (NSFC Grant No: 12401546) and Wenzhou Kean University (Grant No: ISRG2024003).




\bibliographystyle{elsart-num}
\bibliography{references.bib}
 
\end{document}